\newtheorem{thm}{Theorem}[section]
\newtheorem{cor}[thm]{Corollary}
\newtheorem{lem}[thm]{Lemma}
\newtheorem{prop}[thm]{Proposition}
\theoremstyle{definition}
\newtheorem{defn}[thm]{Definition}
\newtheorem{example}[thm]{Example}
\theoremstyle{remark}
\numberwithin{equation}{section}
\begin{document}
\title[$c$-Almost periodic type functions and applications]{$c$-Almost periodic type functions and applications}

\author{M. T. Khalladi}
\address{Department of Mathematics and Computer Sciences, 
University of Adrar, Adrar, Algeria}
\email{ktaha2007@yahoo.fr}

\author{M. Kosti\' c}
\address{Faculty of Technical Sciences,
University of Novi Sad,
Trg D. Obradovi\' ca 6, 21125 Novi Sad, Serbia}
\email{marco.s@verat.net}

\author{M. Pinto}
\address{Departamento de
Matem\'aticas, Facultad de Ciencias, Universidad de Chile, Santiago de Chile, Chile}
\email{pintoj.uchile@gmail.com}

\author{A. Rahmani}
\address{Laboratory of Mathematics, Modeling and Applications (LaMMA), University of Adrar, Adrar, Algeria}
\email{vuralrahmani@gmail.com}

\author{D. Velinov}
\address{Faculty of Civil Engineering, Ss. Cyril and Methodius University, Skopje,
Partizanski Odredi
24, P.O. box 560, 1000 Skopje, North Macedonia}
\email{velinovd@gf.ukim.edu.mk}

{\renewcommand{\thefootnote}{} \footnote{2010 {\it Mathematics
Subject Classification.} 42A75, 43A60, 47D99.
\\ \text{  }  \ \    {\it Key words and phrases.} $c$-almost periodic functions, $c$-uniformly recurrent functions, semi-$c$-periodic functions, convolution products, abstract fractional semilinear inclusions.
\\  \text{  }  \ \ Mohammed Taha Khalladi, Marko Kosti\' c, Manuel Pinto, Abdelkader Rahmani, Daniel Velinov.\\
Marko Kosti\' c and Daniel Velinov are partially supported by grant 174024 of Ministry
of Science and Technological Development, Republic of Serbia and Bilateral project between MANU and SANU.
Manuel Pinto is partially supported by Fondecyt 1170466.}}

\begin{abstract}
In this paper, we introduce several various classes of $c$-almost periodic type functions and their Stepanov generalizations, where $c\in {\mathbb C}$ and $|c|=1.$
We also consider the corresponding classes of $c$-almost periodic type functions 
depending on two variables and prove several related composition principles.
Plenty of illustrative examples and applications are presented.
\end{abstract}
\maketitle

\section{Introduction and preliminaries}

The notion of almost periodicity was studied by H. Bohr around 1925 and later generalized by many others. The interested reader may consult 
the monographs by Besicovitch \cite{besik}, Bezandry and Diagana \cite{Beza}, Corduneanu \cite{corduneanu}-\cite{c-cord}, Diagana \cite{diagana}, Fink \cite{fink}, Gu\' er\' ekata \cite{gaston},
Kosti\' c \cite{nova-mono} and Zaidman \cite{30} for the basic introduction to the theory of almost periodic functions. 
Almost periodic functions and almost automorphic functions play a significant role in the qualitative theory of differential equations, physics, mathematical biology, control theory 
and technical sciences.

The class of $(\omega,c)$-periodic functions and various generalizations have recently been introduced and investigated by Alvarez, G\'omez, Pinto \cite{alvarez1} and
Alvarez, Castillo, Pinto \cite{alvarez2}-\cite{alvarez3} (see also Pinto \cite{pintom}, where the notion of $(\omega,c)$-periodicity has been analyzed for the first time). In \cite{abdoje}-\cite{facta}, we have recently considered various generalizations of $(\omega,c)$-periodic functions. 
Besides the notion depending on two  parameters $\omega$ and $c$, it is meaningful to consider the notion depending only on the parameter $c.$ 
The main aim of this paper is to introduce and analyze the classes of
$c$-almost periodic functions, $c$-uniformly recurrent functions, semi-$c$-periodic functions 
and their Stepanov generalizations (see \cite{chelj} for further information concerning 
semi-Bloch $k$-periodic functions ($k\in {\mathbb{R}}$) and
semi-anti-periodic functions; the class of uniformly recurrent functions has recently been analyzed in \cite{densities}), where $c\in {\mathbb C}$ and $|c|=1.$ We also introduce and investigate the corresponding classes of $c$-almost periodic type functions 
depending on two variables; several composition principles for $c$-almost periodic type functions are established in this direction.
We provide some illustrative examples and applications to
the abstract fractional semilinear integro-differential inclusions.

Before briefly explaining the organization of paper and notation used, the authors would like to thank Professor Toka Diagana for his invitation to submit this article to the special issue of Nonautonomous Dynamical Systems dedicated 
to the memory of Professor Constantin Corduneanu.

By $(E,\|\cdot\|)$ we denote a complex Banach space; $I$ denotes the interval ${\mathbb R}$ or $[0,\infty).$ If $X$ is also a complex Banach space, then 
$L(E,X)$ stands for the space of all continuous linear mappings from $E$ into
$X;$ $L(E)\equiv L(E,E).$ 

In Subsection \ref{madaj} we collect the basic definitions and results about almost periodic type functions and 
their Stepanov generalizations. The notion of $c$-almost periodicity and the notion of $c$-uniform recurrence
are introduced in Definition \ref{drmniga} and Definition \ref{onakao}, respectively (if $c=1,$ then we recover the usual notions of almost periodicity and uniform recurrence). After that, in Definition \ref{semi-ceanti} and Proposition \ref{netodin}, we introduce the notion of semi-$c$-periodicity and prove some necessary and sufficient conditions for a continuous function $f: I \rightarrow E$ to be semi-$c$-periodic.

Proposition \ref{dinko} is important in our analysis because it states that there does not exist a $c$-uniformly recurrent function $f: I \rightarrow E$ if $|c|\neq 1$ (see also 
Proposition \ref{durak-durak} and Proposition \ref{dusanka54321} for some expected results on the introduced classes of functions). Further on, in Proposition \ref{uspevanje}, 
we show that for any $c$-almost periodic function ($c$-uniformly recurrent function, semi-$c$-periodic function)
$f : I \rightarrow E$ and for any positive integer $l\in {\mathbb N}$ the function
$f(\cdot)$ is $c^{l}$-almost periodic ($c^{l}$-uniformly recurrent, semi-$c^{l}$-periodic); after that, in Corollary \ref{uspeva}, we clarify the basic consequences in case that $\arg(c)\in \pi \cdot {\mathbb Q}.$ 
In Proposition \ref{matsah}, we analyze the situation in which $\arg(c)\notin \pi \cdot {\mathbb Q};$ if this is the case, then  
we prove that the $c$-almost periodicity of function $f(\cdot)$ implies the $c'$-almost periodicity of function $f(\cdot)$ for all $c'\in S_{1},$ where $S_{1}:=\{ z\in {\mathbb C} \, ; \, |z|=1\}$. Furthermore,  
if the function $f(\cdot)$ is bounded and $c$-uniformly recurrent, then $f(\cdot)$ is $c'$-uniformly recurrent for all $c'\in S_{1}$ (to sum up, any $c$-almost periodic function is always almost periodic and any bounded $c$-uniformly recurrent function is always uniformly recurrent; the converse statement is false in general); see also 
Proposition \ref{matsahari}.

The main structural properties of introduced classes of functions are stated in Theorem \ref{krewfar}.
After that, in Theorem \ref{petruciani}, we clarify that 
a bounded continuous function
$f : I  \rightarrow E$ is semi-$c$-periodic if and only if there exists a sequence $(f_{n})$ of bounded continuous $c$-periodic functions 
which uniformly converges to $f(\cdot).$ If the function $f(\cdot)$ is real-valued, $c$-uniformly recurrent (semi-$c$-periodic) and $f\neq  0,$ then $c=\pm 1$ and moreover, if $f(t)\geq 0$ for all $t\in I,$ then $c=1;$ 
see
Proposition \ref{gruskamiroslavglisiclord}. Furthermore, if 
$f : I\rightarrow E$ is $c$-uniformly recurrent (semi-$c$-periodic) and $f\neq  0,$ then $f(\cdot)$ cannot vanish at infinity; see Proposition \ref{balija}. 

An interesting extension of \cite[Theorem 2.3]{krag} is proved in Theorem \ref{deckoracunao}, which is one of the main results of the second section. The $c$-almost periodic extensions (semi-$c$-periodic extensions) of functions from the nonnegative real axis to the whole real axis are analyzed in Proposition \ref{batty}.
The notion of asymptotical $c$-almost periodicity (asymptotical $c$-uniform recurrence, asymptotical semi-$c$-periodicity) is introduced in Definition \ref{asymp-cea}, while the notion of 
corresponding Stepanov  classes is introduced in Definition \ref{stepa-anti-c}. 

Without going into full details, 
let us only say that the composition theorems for $c$-almost periodic type functions are analyzed in Subsection \ref{poredfgha}, while 
the invariance of $c$-almost type periodicity under the actions of convolution products is analyzed in Subsection \ref{stepamojalo} (the structural results in these subsections are given without proofs, which can be deduced similarly as in our previous research studies; it is also worth noting that, in Section 2, we present 
numerous illustrative examples and comments about the problems considered).
The final section of paper is reserved for applications of our abstract theoretical results. 

 By
$L^{p}_{loc}(I :E)$, $C(I : E),$ $C_{b}(I : E)$ and $C_{0}(I : E)$ we denote the vector spaces consisting of all $p$-locally integrable functions $f  : I \rightarrow E$, all continuous functions $f  : I \rightarrow E,$ all bounded continuous functions $f  : I \rightarrow E$ and all continuous functions $f  : I \rightarrow E$
satisfying that $\lim_{|t|\rightarrow +\infty}\|f(t)\|=0,$ respectively ($1\leq p <\infty$). As is well known, $C_{0}(I : E)$ is a Banach space equipped with the sup-norm, denoted henceforth by $\|\cdot\|_{\infty}.$ If $f : {\mathbb R} \rightarrow E$, then we define $\check{f} : {\mathbb R} \rightarrow E$ by $\check{f}(x):=f(-x),$ $x\in {\mathbb R}.$

We will use the following auxiliary result, whose proof follows from the argumentation used in the proof that every orbit under an irrational rotation is dense in $S_{1}$
(see e.g. the solution given by C. Blatter in \cite{blater}):

\begin{lem}\label{rad98765}
Suppose that $c=e^{i\pi \varphi},$ where $\varphi \in (-\pi,\pi]\setminus \{0\}$ is not rational. Then for each $c'\in S_{1}$ there exists a 
strictly increasing sequence $(l_{k})$ of positive integers such that $\sup_{k\in {\mathbb N}}(l_{k+1}-l_{k})<\infty$ and $|c^{l_{k}}-c'|<\epsilon.$
\end{lem}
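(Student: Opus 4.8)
The plan is to interpret the sequence $(c^{n})$ as the orbit of the point $1$ under the rotation $R\colon S_{1}\to S_{1}$, $R(z):=cz$, and then to upgrade the classical density statement to the stronger conclusion that the set of indices $n$ for which $c^{n}$ lands in a small arc around $c'$ is \emph{syndetic}, i.e. has bounded gaps. Throughout I fix $\epsilon>0$ and $c'\in S_{1}$ (reading the implicit quantifier on $\epsilon$ in the statement), and I write $A:=\{z\in S_{1}:|z-c'|<\epsilon\}$ for the corresponding open arc of $S_{1}$.

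First I would record that, since $\varphi$ is irrational, $\pi\varphi=2\pi(\varphi/2)$ is an irrational multiple of $2\pi$, so $R$ is an irrational rotation and hence minimal: by the argument of Blatter \cite{blater} cited above, every orbit $\{R^{n}(z):n\in\NN\}$ is dense in $S_{1}$. In particular the orbit of $1$ is dense, which already shows that $A$ is visited by the sequence $(c^{n})$, but not yet with any control on the return times.

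The main step is to pass from density to bounded gaps by a compactness argument. For each $n\in\NN_{0}$ consider the open set $R^{-n}(A)=\{z\in S_{1}:c^{n}z\in A\}$. Minimality guarantees that every $z\in S_{1}$ has some orbit point $R^{n}(z)\in A$, so the family $\{R^{-n}(A):n\in\NN_{0}\}$ is an open cover of the compact circle $S_{1}$; hence there is $N\in\NN$ with $S_{1}=\bigcup_{n=0}^{N}R^{-n}(A)$. Applying this to the point $R^{m}(1)=c^{m}$ for an arbitrary $m\in\NN_{0}$, I obtain some $0\le n\le N$ with $R^{n+m}(1)=c^{n+m}\in A$; thus every block of $N+1$ consecutive indices contains at least one index $l$ with $c^{l}\in A$, that is, with $|c^{l}-c'|<\epsilon$.

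Finally I would list the return times $\{l\in\NN:c^{l}\in A\}$ in strictly increasing order as $(l_{k})_{k\in\NN}$; these are positive integers, and by the previous paragraph they are relatively dense, so $\sup_{k}(l_{k+1}-l_{k})\le N+1<\infty$, while $|c^{l_{k}}-c'|<\epsilon$ holds by construction. The only genuine obstacle is precisely this upgrade from mere density to the syndetic (bounded-gap) property; it is where minimality of the irrational rotation together with the compactness of $S_{1}$ is essential, and it is the point at which the cited density argument has to be strengthened rather than simply quoted. An equivalent and more elementary route, closer in spirit to Blatter's pigeonhole proof, would be to choose $p\in\NN$ with $0<|c^{p}-1|$ as small as desired and observe that the arithmetic progression $(c^{kp})_{k}$ then sweeps $S_{1}$ in uniformly small steps, so that it meets $A$ once within every run of boundedly many values of $k$, again yielding the bounded gaps.
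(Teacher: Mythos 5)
Your argument is correct, but your primary route differs from the one the paper has in mind. The paper offers no written proof: it only says the lemma ``follows from the argumentation used in the proof that every orbit under an irrational rotation is dense in $S_{1}$'' and cites Blatter's pigeonhole solution, i.e.\ exactly the ``more elementary route'' you relegate to your final sentence --- choose $p\in{\mathbb N}$ with $0<|c^{p}-1|<\epsilon$ (possible since the powers $c^{n}$ are pairwise distinct, by irrationality of $\varphi$), note that the arithmetic progression $(c^{kp})_{k}$ then advances around $S_{1}$ in angular steps smaller than the width of the arc $\{z:|z-c'|<\epsilon\}$, hence cannot skip over it and must enter it at least once in every $\lceil 2\pi/\theta\rceil+1$ consecutive values of $k$ (where $\theta$ is the step angle); this yields return times inside an arithmetic progression with explicitly bounded gaps. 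Your main argument instead upgrades density to syndeticity via minimality and compactness: the open sets $R^{-n}(A)$ cover $S_{1}$, a finite subcover $\bigcup_{n=0}^{N}R^{-n}(A)$ exists, and applying it to $c^{m}$ shows every block of $N+1$ consecutive exponents contains a return time. Both are complete; your compactness argument is more abstract and generalizes verbatim to return times in any minimal topological dynamical system, at the price of a nonconstructive bound $N$, whereas Blatter's argument is elementary, quantitative, and is the one the authors actually intend the reader to adapt. You are also right to flag, and repair, the missing quantifier ``for each $\epsilon>0$'' in the statement.
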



\subsection{Almost periodic type functions and generalizations}\label{madaj}

Given $\epsilon>0,$ we call $\tau>0$ an $\epsilon$-period for $f(\cdot)$ if
\begin{align*}
\| f(t+\tau)-f(t) \| \leq \epsilon,\quad t\in I.
\end{align*}
The set constituted of all $\epsilon$-periods for $f(\cdot)$ is denoted by $\vartheta(f,\epsilon).$ It is said that $f(\cdot)$ is almost periodic
if for each $\epsilon>0$ the set $\vartheta(f,\epsilon)$ is relatively dense in $[0,\infty),$ which means that
there exists $l>0$ such that any subinterval of $[0,\infty)$ of length $l$ meets $\vartheta(f,\epsilon)$. The vector space consisting of all almost periodic functions is denoted by $AP(I :E).$
This space contains the space consisting of all continuous periodic functions $f : I\rightarrow E.$

Let $f \in AP(I : E).$ Then the Bohr-Fourier coefficient
$$
P_{r}(f) := \lim_{t\rightarrow \infty}\frac{1}{t}\int^{t}_{0}e^{-irs}f(s)\, ds
$$ 
exists for all $r\in {\mathbb R};$ 
furthermore, if $P_{r}(f) = 0$ for all $r \in {\mathbb R},$ then $f(t) = 0$ for all $t \in {\mathbb R},$
and $\sigma(f):=\{r\in {\mathbb R} :  P_{r}(f) \neq 0\}$ is at most countable.
The function $f : I \rightarrow E$ is said to be asymptotically almost periodic if and only if
there exist an almost periodic function $h : I  \rightarrow E$ and a function $\phi \in  C_{0}(I  : E)$
such that $f(t) = h(t)+\phi (t)$ for all $t\in I$. This is equivalent to saying that, for every $\epsilon >0,$ we can find numbers $ l > 0$ and $M >0$ such that every subinterval of $I$ of
length $l$ contains, at least, one number $\tau$ such that $\|f(t+\tau)-f(t)\| \leq \epsilon$ provided $|t|,\ |t+\tau|\geq M.$ 

For any almost periodic function $f : I \rightarrow E$, the spectral synthesis states that
\begin{align}\label{zaq54321}
f\in \overline{span\{ e^{i\mu \cdot } x : \mu \in \sigma(f),\ x\in R(f)\}},
\end{align}
where the closure is taken in the space $C_{b}(I: E).$ By $AP_{\Lambda}(I:E),$ where $ \Lambda$ is a non-empty subset of ${\mathbb R},$ we denote the vector subspace of $AP(I:E)$ consisting of all functions $f\in AP(I:E)$ for which the inclusion $\sigma(f)\subseteq \Lambda$ holds. We have that
$AP_{\Lambda}(I:E)$ is a closed subspace of $AP(I:E)$ and therefore Banach space itself.

For the sequel, we need some preliminary results from the pioneering paper \cite{Ba} by Bart and Goldberg, who introduced the notion of an almost periodic strongly continuous semigroup there.
The translation semigroup $(W(t))_{t\geq 0}$ on $AP([0,\infty) : E),$ given by $[W(t)f](s):=f(t+s),$ $t\geq 0,$ $s\geq 0,$ $f\in AP([0,\infty) : E)$ is consisted solely of surjective isometries $W(t)$ ($t\geq 0$) and can be extended to a $C_{0}$-group $(W(t))_{t\in {\mathbb R}}$ of isometries on $AP([0,\infty) : E),$ where $W(-t):=W(t)^{-1}$ for $t>0.$ Furthermore, the mapping ${\mathbf E} : AP([0,\infty) : E) \rightarrow AP({\mathbb R} : E),$ defined by
$$
[{\mathbf E}f](t):=[W(t)f](0),\quad t\in {\mathbb R},\ f\in AP([0,\infty) : E),
$$
is a linear surjective isometry and ${\mathbf E}f(\cdot)$ is a unique continuous almost periodic extension of a function $f(\cdot)$ from $AP([0,\infty) : E)$ to the whole real line. 

Following Haraux and Souplet \cite{haraux}, we say that a continuous function $f(\cdot)$ is uniformly recurrent if and only if there exists a strictly increasing sequence $(\alpha_{n})$ of positive real numbers such that $\lim_{n\rightarrow +\infty}\alpha_{n}=+\infty$ and
\begin{align*}
\lim_{n\rightarrow \infty}\sup_{t\in {\mathbb R}}\bigl\|f(t +\alpha_{n})-f(t)\bigr\|=0.
\end{align*}
It is well known that any almost periodic function is uniformly recurrent, while the converse statement is not true in general. For more details about uniformly recurrent functions, we refer the reader to \cite{densities}.

Let $1\leq p <\infty.$ We continue by recalling that a function $f\in L^{p}_{loc}(I :E)$ is said to be Stepanov $p$-bounded if and only if
$$
\|f\|_{S^{p}}:=\sup_{t\in I}\Biggl( \int^{t+1}_{t}\|f(s)\|^{p}\, ds\Biggr)^{1/p}<\infty.
$$
Equipped with the above norm, the space $L_{S}^{p}(I:E)$ consisted of all Stepanov $p$-bounded functions is a Banach space.
A function $f\in L_{S}^{p}(I:E)$ is said to be Stepanov $p$-almost periodic if and only if the function
$
\hat{f} : I \rightarrow L^{p}([0,1] :E),
$ defined by
\begin{align}\label{zad}
\hat{f}(t)(s):=f(t+s),\quad t\in I,\ s\in [0,1],
\end{align}
is almost periodic. Furthermore, we say that a function $f\in L_{S}^{p}(I :E)$ is asymptotically Stepanov $p$-almost periodic if and only if there exist a Stepanov $p$-almost periodic
function $g\in  L_{S}^{p}(I : E)$ and a function $q\in  L_{S}^{p}(I  : E)$ such that $f(t)=g(t)+q(t),$ $t\in I$ and $\hat{q}\in C_{0}(I  : L^{p}([0,1]: E)).$

We also need the following definition from \cite{densities}.

\begin{defn}\label{ret}
Let $1\leq p<\infty.$
\begin{itemize}
\item[(i)] 
A function $f\in L_{loc}^{p}(I:E)$ is said to be Stepanov $p$-uniformly recurrent  if and only if the function
$
\hat{f} : I \rightarrow L^{p}([0,1] :E),
$ defined by \eqref{zad},
is uniformly recurrent.
\item[(ii)] 
A function $f\in L_{loc}^{p}(I  :E)$ is said to be asymptotically Stepanov $p$-uniformly recurrent
if and only if there exist a Stepanov $p$-uniformly recurrent
function $h(\cdot)$ and a function $q\in  L_{S}^{p}(I : E)$ such that $f(t)=h(t)+q(t),$ $t\in I$ and $\hat{q}\in C_{0}(I  : L^{p}([0,1]: E)).$
\end{itemize}
\end{defn}

\section[$c$-Almost periodic type functions]{$c$-Almost periodic type functions}\label{c-almost}

With the exception of Proposition \ref{dinko} and the paragraph preceding it, in this paper we will always assume that $c\in {\mathbb C}$ and $|c|=1.$
Let $f: I \rightarrow E$ be a continuous function and let a number $\epsilon>0$ be given. We call a number $\tau >0$ an $(\epsilon,c)$-period for $f(\cdot)$ if $\|f(t+\tau)-cf(t)\|\leq\epsilon$ for all $t\in I$. 
By $\vartheta_{c}(f,\epsilon)$ we denote the set consisting of all $(\epsilon ,c)$-periods for $f(\cdot)$. 

We are concerned with the following notion:

\begin{defn}\label{drmniga}
It is said that $f(\cdot)$ is $c$-almost periodic if and only if for each $\epsilon>0$ the set $\vartheta_{c}(f,\epsilon)$ is relatively dense in $[0,\infty).$
The space consisting of all $c$-almost periodic functions from the interval $I$ into $E$ will be denoted by $AP_{c}(I:E)$.
\end{defn}

If $c=-1,$ then we also say that the function $f(\cdot)$ is almost anti-periodic. The space of almost anti-periodic functions has recently been analyzed in \cite{krag}.

In general case, it is very simple to prove that the following holds (see e.g., the proof of \cite[Theorem 4$^{\circ}$, p. 2]{besik}):

\begin{prop}\label{ogranicenost}
Suppose that $f : I \rightarrow E$ is $c$-almost periodic. Then $f(\cdot)$ is bounded.
\end{prop}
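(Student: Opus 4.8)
The plan is to mimic the classical Bohr argument showing that an almost periodic function is bounded, adapting it to the $c$-periods. The key observation is that since $|c|=1$, the factor $c$ does not disturb norms: $\|cf(t)\| = \|f(t)\|$. First I would fix $\epsilon = 1$ and invoke the definition of $c$-almost periodicity to obtain an $l>0$ such that every subinterval of $[0,\infty)$ of length $l$ contains at least one $(\,1,c)$-period $\tau \in \vartheta_{c}(f,1)$. Next I would note that, by continuity of $f$ on the compact interval $[0,l]$, the function $f$ is bounded there, say $\|f(s)\| \leq M$ for all $s \in [0,l]$.

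The heart of the argument is to control $\|f(t)\|$ at an arbitrary point $t \in I$ by transporting it back into $[0,l]$ using a well-chosen $c$-period. Given $t$, I would apply the relative density to the interval $[t-l, t]$ (or an analogous interval fitting within $I$) to select $\tau \in \vartheta_{c}(f,1)$ with $t - \tau \in [0,l]$. Writing $s := t-\tau \in [0,l]$, the defining inequality $\|f(s+\tau) - cf(s)\| \leq 1$ becomes $\|f(t) - cf(s)\| \leq 1$, whence
\begin{align*}
\|f(t)\| \leq \|f(t) - cf(s)\| + \|cf(s)\| = \|f(t) - cf(s)\| + \|f(s)\| \leq 1 + M,
\end{align*}
using $|c|=1$ in the middle equality. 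Since the bound $1+M$ is independent of $t$, this gives $\|f(t)\| \leq 1+M$ for all $t \in I$, establishing boundedness.

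The one place requiring slight care is the geometry of choosing $\tau$ so that $t-\tau$ lands in $[0,l]$, especially near the left endpoint when $I = [0,\infty)$ and $t$ is small; but for small $t$ the bound is immediate from $\|f(t)\| \leq M$ since $t \in [0,l]$, and for $t \geq l$ the interval $[t-l,t] \subseteq [0,\infty)$ is genuinely available, so the relative density applies cleanly. The case $I = \mathbb{R}$ is handled symmetrically, with the analogous one-sided estimate bounding $\|f(t)\|$ for $t$ negative as well. I do not expect any real obstacle here: the only subtlety — that multiplying by $c$ might inflate norms — is precisely neutralized by the standing hypothesis $|c|=1$, which is why the standard proof transfers essentially verbatim.
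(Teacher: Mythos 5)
Your proof is correct and is exactly the classical Bohr--Besicovitch argument that the paper invokes by reference (it gives no proof of its own, citing the proof of Theorem 4$^{\circ}$ in Besicovitch \cite{besik}, which is precisely this transport-into-$[0,l]$ scheme); the factor $c$ with $|c|=1$ is harmless for the reason you state. The endpoint/negative-$t$ remarks are the right way to handle the only geometric subtlety, so nothing is missing.
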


The following generalization of $c$-almost periodicity is meaningful, as well:

\begin{defn}\label{onakao}
Let $c\in {\mathbb C} \setminus \{0\}.$ Then a continuous function $f : I \rightarrow E$ is said to be $c$-uniformly recurrent if and only if there exists a strictly increasing sequence $(\alpha_{n})$ of positive real numbers such that $\lim_{n\rightarrow +\infty}\alpha_{n}=+\infty$ and
\begin{align}\label{rastvor}
\lim_{n\rightarrow +\infty}\bigl\| f(\cdot +\alpha_{n})-cf(\cdot) \bigr\|_{\infty}=0.
\end{align}
If $c=-1,$ then we also say that the function $f(\cdot)$ is uniformly anti-recurrent.
The space consisting of all $c$-uniformly recurrent functions from the interval $I$ into $E$ will be denoted by $UR_{c}(I:E)$.
\end{defn}

Define now ${\mathbb S}:={\mathbb N}$ if $I=[0,\infty),$ and
${\mathbb S}:={\mathbb Z}$ if $I={\mathbb R}.$ We will also consider the following notion:

\begin{defn}\label{semi-ceanti}
Let $f\in C(I:E).$
It is said that $f(\cdot )$ is semi-$c$-periodic if and only if
\begin{equation*}
\forall \varepsilon >0\quad \exists p> 0\quad \forall m\in {\mathbb{S }}%
\quad \forall x\in I \quad \bigl\| f(x+mp)-c^{m}f(x)\bigr\| \leq
\varepsilon .
\end{equation*}
The space of all semi-$c$-periodic functions will be denoted by 
$
\mathcal{SAP}_{c}(I:E).$ 
\end{defn}

Suppose that $I={\mathbb R},$ $f\in C({\mathbb R}:E),$ $p>0$ and $m\in {\mathbb N}.$ Then we have
\begin{align*}
\sup_{x\in {\mathbb R}}&\bigl\| f(x+mp)-c^{m}f(x) \bigr\|
=\sup_{x\in {\mathbb R}}\bigl\| f(x)-c^{m}f(x-mp) \bigr\|
\\& =\sup_{x\in {\mathbb R}}\bigl\| c^{m}\bigl[c^{-m}f(x)-f(x-mp)\bigr] \bigr\|
=|c|^{m}\sup_{x\in {\mathbb R}}\bigl\| f(x-mp)-c^{-m}f(x) \bigr\|
\\& = \sup_{x\in {\mathbb R}}\bigl\| f(x-mp)-c^{-m}f(x) \bigr\| \in [0,\infty].
\end{align*}

Therefore, we have the following:

\begin{prop}\label{netodin}
Suppose that 
$f\in C(I:E).$ Then $f(\cdot )$ is semi-$c$-periodic if and only if 
\begin{equation*}
\forall \varepsilon >0\quad \exists p> 0\quad \forall m\in {\mathbb{N }}%
\quad \forall x\in I \quad \bigl\| f(x+mp)-c^{m}f(x)\bigr\| \leq
\varepsilon .
\end{equation*}
Furthermore, if $I={\mathbb R},$ then the above is also equivalent with 
\begin{equation*}
\forall \varepsilon >0\quad \exists p> 0\quad \forall m\in -{\mathbb{N }}%
\quad \forall x\in I \quad \bigl\| f(x+mp)-c^{m}f(x)\bigr\| \leq
\varepsilon .
\end{equation*}
\end{prop}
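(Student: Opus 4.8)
The plan is to reduce everything to the translation identity displayed immediately before the statement: for $I=\mathbb{R}$, $f\in C(\mathbb{R}:E)$, $p>0$ and $m\in \mathbb{N}$ one has
\[
\sup_{x\in\mathbb{R}}\bigl\|f(x+mp)-c^{m}f(x)\bigr\|=\sup_{x\in\mathbb{R}}\bigl\|f(x-mp)-c^{-m}f(x)\bigr\|,
\]
an identity that hinges on $|c|=1$ (which cancels the factor $|c|^{m}$) and on the fact that translation by $mp$ is a bijection of $\mathbb{R}$.

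First I would address the first equivalence. When $I=[0,\infty)$ we have $\mathbb{S}=\mathbb{N}$ by definition, so the condition in Definition \ref{semi-ceanti} is verbatim the asserted $\mathbb{N}$-condition and there is nothing to prove. When $I=\mathbb{R}$ we have $\mathbb{S}=\mathbb{Z}$; the implication from semi-$c$-periodicity to the $\mathbb{N}$-condition is immediate since $\mathbb{N}\subseteq\mathbb{Z}$. For the converse, fix $\varepsilon>0$ and choose $p>0$ so that the required bound holds for every $m\in\mathbb{N}$ and every $x$; I would then verify that the same $p$ works for all $m\in\mathbb{Z}$. The case $m=0$ is trivial because $c^{0}=1$, and for $m=-k$ with $k\in\mathbb{N}$ the translation identity gives $\sup_{x}\|f(x-kp)-c^{-k}f(x)\|=\sup_{x}\|f(x+kp)-c^{k}f(x)\|\leq\varepsilon$, which is precisely the bound for the negative index.

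The second equivalence then follows almost for free. Writing $m=-k$ with $k\in\mathbb{N}$, the $-\mathbb{N}$-condition reads $\sup_{x}\|f(x-kp)-c^{-k}f(x)\|\leq\varepsilon$ for all $k\in\mathbb{N}$, and by the translation identity this supremum equals $\sup_{x}\|f(x+kp)-c^{k}f(x)\|$. Hence the $-\mathbb{N}$-condition and the $\mathbb{N}$-condition agree term by term and are therefore equivalent; combining this with the first equivalence shows that, for $I=\mathbb{R}$, semi-$c$-periodicity is equivalent to the $-\mathbb{N}$-condition, as claimed.

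The argument carries no serious obstacle, since its entire content is packaged in the preceding translation identity. The only points demanding care are that this identity uses $|c|=1$ in an essential way, that it is valid only on $\mathbb{R}$ (so the two-sided statements genuinely require $I=\mathbb{R}$, whereas on $[0,\infty)$ the first equivalence is a definitional tautology), and that a single $p$ must be shown to serve simultaneously for the indices in $\mathbb{N}$, in $-\mathbb{N}$ and for $m=0$.
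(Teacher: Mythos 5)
Your proposal is correct and follows essentially the same route as the paper, which derives the proposition directly from the displayed translation identity $\sup_{x\in\mathbb{R}}\|f(x+mp)-c^{m}f(x)\|=\sup_{x\in\mathbb{R}}\|f(x-mp)-c^{-m}f(x)\|$ stated immediately before it. Your additional remarks on the half-line case being a definitional tautology and on a single $p$ serving all indices are exactly the (implicit) bookkeeping the paper leaves to the reader.
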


It can be very simply shown that any 
semi-$c$-periodic function is bounded.
Keeping in mind Proposition \ref{netodin} and this observation, we may conclude that the notion introduced in Definition \ref{semi-ceanti} is equivalent and extends the notion of semi-periodicity for case $c=1,$
introduced by Andres and Pennequin in \cite{andres1}, and the notion of semi-anti-periodicity for case $c=-1,$ introduced by Chaouchi et al in \cite{chelj}. 

The notion introduced in Definition \ref{semi-ceanti} can be considered with general complex number
$c\in {\mathbb C}\setminus \{0\},$ but the situation is much more complicated in this case (cf. \cite{novipinto} for more details). The same holds with the notion introduced in Definition \ref{drmniga} and Definition \ref{onakao},
but then we have the following result: 

\begin{prop}\label{dinko}
Suppose that  $f\in UR_{c}(I:E)$ and $c\in {\mathbb C} \setminus \{0\}$ satisfies $|c|\neq 1.$ Then $f\equiv 0.$ 
\end{prop}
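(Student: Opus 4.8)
The plan is to show that $f(t)\to 0$ as $t\to+\infty$; once this is known the conclusion is immediate. Indeed, writing the defining relation as $cf(t)=f(t+\alpha_{n})-g_{n}(t)$ with $g_{n}(t):=f(t+\alpha_{n})-cf(t)$ and $\eta_{n}:=\|g_{n}\|_{\infty}\to 0$, I would fix $t$ and let $n\to\infty$: since $t+\alpha_{n}\to+\infty$ we get $f(t+\alpha_{n})\to 0$, while $g_{n}(t)\to 0$, so $cf(t)=0$ and hence $f(t)=0$ because $c\neq 0$. Thus everything reduces to the decay of $f$ at $+\infty$, and the two regimes $|c|<1$ and $|c|>1$ must be treated separately.

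For $|c|<1$ I would iterate the recurrence along one fixed, large term $\alpha:=\alpha_{n}$. A straightforward induction gives
\[
f(t+k\alpha)=c^{k}f(t)+\sum_{j=0}^{k-1}c^{k-1-j}g_{n}(t+j\alpha),\qquad k\in\NN,
\]
so that $\|f(t+k\alpha)\|\le |c|^{k}\|f(t)\|+\eta_{n}/(1-|c|)$. Bounding $\|f(t)\|$ by its maximum over the compact base interval (finite, by continuity) and writing $\tau=t+k\alpha$ with $t$ in that base interval and $k\to\infty$ as $\tau\to+\infty$, one obtains $\limsup_{\tau\to+\infty}\|f(\tau)\|\le \eta_{n}/(1-|c|)$; letting $n\to\infty$ forces this $\limsup$ to be $0$.

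The expanding case $|c|>1$ is the main obstacle, precisely because forward iteration now makes $\|f(t+k\alpha)\|$ grow like $|c|^{k}$ instead of decaying, so a nonzero $f$ need not even be bounded and the naive supremum estimate fails. My plan is to exploit the \emph{contracting} (backward) direction by playing a single fixed base term $\alpha:=\alpha_{n_{0}}$ against a far term $\alpha_{N}$. Writing $\alpha_{N}=k_{N}\alpha+s_{N}$ with $s_{N}\in[0,\alpha)$ and $k_{N}=\lfloor \alpha_{N}/\alpha\rfloor$, I would equate the $k_{N}$-fold $\alpha$-iteration of $f$ based at $t+s_{N}$ with the single $\alpha_{N}$-recurrence based at $t$ and solve for $f(t+s_{N})$, obtaining
\[
\|f(t+s_{N})\|\le |c|^{1-k_{N}}\|f(t)\|+|c|^{-k_{N}}\eta_{N}+\frac{\eta_{n_{0}}}{|c|-1}.
\]
Given any target $\tau\ge\alpha$, applying this with $t=\tau-s_{N}\in[\tau-\alpha,\tau]$ and using that $\|f\|$ is bounded on that fixed compact interval, the first two terms vanish as $N\to\infty$ (because $k_{N}\to\infty$), leaving $\|f(\tau)\|\le \eta_{n_{0}}/(|c|-1)$ for all $\tau\ge\alpha_{n_{0}}$. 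Letting $n_{0}\to\infty$ gives $f(t)\to 0$ as $t\to+\infty$, and the reduction in the first paragraph finishes the proof. On $I=\RR$ one may alternatively bypass the expanding case entirely, since $\check f$ is easily checked to lie in $UR_{c^{-1}}(\RR:E)$ with the same sequence $(\alpha_{n})$ and $|c^{-1}|<1$, so the contracting case applies. The delicate point throughout is the interplay between the uniform error $\eta_{n_{0}}$ and the growing base length $\alpha_{n_{0}}$, which is exactly what the far-index estimate is designed to balance.
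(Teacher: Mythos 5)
Your argument is correct. The paper's own proof shares your core mechanism (iterating the approximate functional equation $f(t+\alpha)\approx cf(t)$ along multiples of a fixed $\alpha_{n}$) but is organized as a proof by contradiction: assuming $f(t_{0})\neq 0$, it derives two-sided bounds of the form $|c|^{k}m-\frac{|c|^{k}-1}{n(|c|-1)}\leq\|f(t)\|\leq |c|^{k}M-\frac{|c|^{k}-1}{n(|c|-1)}$ on the blocks $[k\alpha_{n},(k+1)\alpha_{n}]$, and then in the case $|c|<1$ contradicts $\|f(t_{0}+\alpha_{m})\|\to|c|\,\|f(t_{0})\|>0$, while in the case $|c|>1$ it shows $\|f(t_{0}+k\alpha_{n})\|\to+\infty$ against a boundedness estimate obtained from the same recurrence. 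Your route is a direct one: you first isolate the reduction ``$f$ vanishes at $+\infty$ implies $f\equiv 0$'' (which is exactly the content of the paper's Proposition \ref{balija}, used here in the converse direction), then prove the decay. For $|c|<1$ your forward iteration giving $\limsup_{\tau\to+\infty}\|f(\tau)\|\leq\eta_{n}/(1-|c|)$ is the clean quantitative form of what the paper gestures at. The genuinely different ingredient is your treatment of $|c|>1$: instead of deriving unboundedness and contradicting it, you solve the $k_{N}$-fold iterate \emph{backwards} against a single far term $\alpha_{N}$ (using $\alpha_{N}=k_{N}\alpha+s_{N}$) to get the contracting bound $\|f(\tau)\|\leq|c|^{1-k_{N}}\|f(\tau-s_{N})\|+|c|^{-k_{N}}\eta_{N}+\eta_{n_{0}}/(|c|-1)$, which yields uniform smallness at infinity without ever assuming $f$ bounded; this is tighter and works on $I=[0,\infty)$, where the reflection trick $\check f\in UR_{1/c}$ (the paper's Proposition \ref{durak-durak}) is unavailable. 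What your version buys is a self-contained, quantitatively explicit proof whose constants ($\eta_{n}/(1-|c|)$ and $\eta_{n_{0}}/(|c|-1)$) are visible; what the paper's version buys is brevity, at the cost of leaving the quantities $m$ and $M$ in its display \eqref{ironija} essentially undefined at the point of use.
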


\begin{proof}
Without loss of generality, we may assume that $I=[0,\infty).$ 
Suppose to the contrary that there exists $t_{0}\geq 0$ such that $f(t_{0})\neq 0.$
Inductively, \eqref{rastvor} implies
\begin{align}\label{ironija}
|c|^{k}m-\frac{|c|^{k}-1}{n(|c|-1)}\leq \|f(t)\|\leq |c|^{k}M-\frac{|c|^{k}-1}{n(|c|-1)},\quad k\in {\mathbb N},\ t\in \bigl[k\alpha_{n},(k+1)\alpha_{n} \bigr].
\end{align}
Consider now case $|c|<1.$ Let $0<\epsilon<c\|f(t_{0})\|.$ Then \eqref{ironija} yields that there exist integers $k_{0}\in {\mathbb N}$ and $n\in {\mathbb N}$ such that 
for each $k\in {\mathbb N}$ with $k\geq k_{0}$ we have
$\|f(t)\| \leq \epsilon/2,$ $t\in [k\alpha_{n},(k+1)\alpha_{n}].$ Then the contradiction is obvious because for each $m\in {\mathbb N}$ with $m>n$ there exists $k\in {\mathbb N}$ such that $t_{0}+\alpha_{m}\in [k\alpha_{n},(k+1)\alpha_{n} ]$ and therefore
$\|f(t_{0}+\alpha_{m})\|\geq c\| f(t_{0})\|-(1/m)\rightarrow  c\| f(t_{0})\|>\epsilon,$ $m\rightarrow +\infty.$
Consider now case $|c|>1;$ let $n\in {\mathbb N}$ be such that $\|f(t_{0})\|>1/(n(|c|-1))$ and $M:=\max_{t\in [0,2\alpha_{n}]}\|f(t)\|>0.$
Then for each $m\in {\mathbb N}$ with $m>n$ there exists $k\in {\mathbb N}$ such that $\alpha_{m}\in [(k-1)\alpha_{n},k\alpha_{n}]$ and therefore
$\|f(t+\alpha_{m})\|\leq 1+|c|M,$ $t\in [0,2\alpha_{n}].$ On the other hand, we obtain inductively from \eqref{rastvor}
that
$$
\bigl\| f(t_{0}+k\alpha_{n})\bigr\|\geq |c|^{k}\Biggl[ \bigl\|f(t_{0})\bigr\|-\frac{1}{n(|c|-1)} \Biggr]+\frac{1}{n(|c|-1)}\rightarrow +\infty \ \ \mbox{ as } k\in {\mathbb N},
$$
which immediately yields a contradiction.
\end{proof}

Using the same arguments as in the proof of \cite[Lemma 3.4]{abdoje}, we can clarify the following: 

\begin{prop}\label{durak-durak} 
Suppose that $I={\mathbb R}$ and $f : {\mathbb R} \rightarrow E.$ Then the function $f(\cdot)$ is $c$-almost periodic ($c$-uniformly recurrent, semi-$c$-periodic)
if and only if the function $\check{f}(\cdot)$ is $1/c$-almost periodic  ($1/c$-uniformly recurrent, semi-$1/c$-periodic).
\end{prop}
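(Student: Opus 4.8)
The plan is to prove Proposition \ref{durak-durak} directly from the definitions by exploiting the elementary relationship between the defining inequalities for $f$ and those for $\check f$. The key observation is that the reflection $x\mapsto -x$ converts a forward translation by $\tau$ into a backward translation, so that an $(\epsilon,c)$-period for $f$ should correspond to an $(\epsilon,1/c)$-period for $\check f$. Since $|c|=1$ gives $1/c=\bar c$, this is harmless; the proof should work by a clean change of variables. I would treat the three notions (almost periodicity, uniform recurrence, semi-periodicity) in parallel, since each reduces to the same algebraic manipulation.

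First I would record the basic identity. For $\tau>0$ and any $x\in\mathbb R$ we have $\check f(x+\tau)=f(-x-\tau)$ and $\check f(x)=f(-x)$. Writing $y:=-x-\tau$ (which ranges over all of $\mathbb R$ as $x$ does), I would compute
\begin{align*}
\bigl\|\check f(x+\tau)-\tfrac{1}{c}\check f(x)\bigr\|
&=\bigl\|f(-x-\tau)-\tfrac{1}{c}f(-x)\bigr\|
=\bigl\|f(y)-\tfrac{1}{c}f(y+\tau)\bigr\|\\
&=\tfrac{1}{|c|}\bigl\|f(y+\tau)-cf(y)\bigr\|
=\bigl\|f(y+\tau)-cf(y)\bigr\|,
\end{align*}
using $|c|=1$ in the last step. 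Taking the supremum over $x\in\mathbb R$ (equivalently over $y\in\mathbb R$) shows that $\tau$ is an $(\epsilon,1/c)$-period for $\check f$ if and only if it is an $(\epsilon,c)$-period for $f$; in symbols $\vartheta_{1/c}(\check f,\epsilon)=\vartheta_{c}(f,\epsilon)$. The $c$-almost periodic case then follows immediately: relative density of this common set of periods is the defining condition for both statements.

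For the $c$-uniform recurrence case I would apply the same identity with $\tau=\alpha_n$ and observe that $\|\check f(\cdot+\alpha_n)-\tfrac1c\check f(\cdot)\|_\infty=\|f(\cdot+\alpha_n)-cf(\cdot)\|_\infty$, so the very same sequence $(\alpha_n)$ witnesses $1/c$-uniform recurrence of $\check f$ exactly when it witnesses $c$-uniform recurrence of $f$. For the semi-$c$-periodic case I would run the identity with $\tau=mp$ for $m\in\mathbb N$, replacing $c$ by $c^m$ (note $|c^m|=1$ and $(c^m)^{-1}=(1/c)^m$), which yields $\|\check f(x+mp)-(1/c)^m\check f(x)\|=\|f(y+mp)-c^m f(y)\|$; taking the supremum over $x$ shows the defining inequality for $\check f$ (with $1/c$) holds for a given $p$ precisely when it holds for $f$ (with $c$). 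By Proposition \ref{netodin} it suffices to verify the condition over $m\in\mathbb N$, so this covers the semi-$c$-periodic statement as well.

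Finally, since $1/(1/c)=c$, each equivalence is symmetric under interchanging $f$ with $\check f$ and $c$ with $1/c$, so the stated biconditionals hold in both directions with no further work. I do not anticipate a genuine obstacle here: the only point requiring slight care is the bookkeeping of the change of variables $y=-x-\tau$ and the verification that it is a bijection of $\mathbb R$ onto itself, together with the repeated use of $|c|=1$ to discard the factor $1/|c|$. The proof is essentially a one-line identity applied three times.
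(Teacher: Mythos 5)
Your proof is correct and is essentially the argument the paper has in mind: the paper gives no proof of Proposition \ref{durak-durak} beyond citing \cite[Lemma 3.4]{abdoje}, and your change of variables $y=-x-\tau$ combined with pulling out the factor $|c|^{-1}=1$ is exactly the same manipulation displayed just before Proposition \ref{netodin}. The reduction of the semi-$c$-periodic case to $m\in\mathbb{N}$ via Proposition \ref{netodin} and the symmetry remark $\check{\check f}=f$, $1/(1/c)=c$ complete the argument correctly.
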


Since for each numbers $t,\ \tau \in I$ and $m\in {\mathbb N}
$ we have 
\begin{align*}
\Bigl| \bigl\|f(t+\tau)\bigr\|-\|f(t)\|\Bigr|=\Bigl| \bigl\|f(t+\tau)\bigr\|-\| c^{m}f(t)\|\Bigr|\leq \bigl\| f(t +\tau)-c^{m}f(t) \bigr\|,
\end{align*}
the following result simply follows:

\begin{prop}\label{dusanka54321}
Suppose that $f : I \rightarrow E$ is $c$-almost periodic ($c$-uniformly recurrent, semi-$c$-periodic). Then $\|f\| : I\rightarrow [0,\infty)$ is almost periodic (uniformly recurrent, semi-periodic).
\end{prop}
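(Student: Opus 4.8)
The plan is to exploit the single inequality displayed immediately before the statement, which is the whole engine of the proof. The crucial point is that $|c|=1$ forces $\|c^{m}f(t)\|=|c|^{m}\|f(t)\|=\|f(t)\|$ for every $m\in\mathbb{N}$ and every $t\in I$, so the reverse triangle inequality yields
\begin{align*}
\Bigl|\,\|f(t+\tau)\|-\|f(t)\|\,\Bigr|\leq \|f(t+\tau)-c^{m}f(t)\|
\end{align*}
for all $t,\tau\in I$ and all admissible $m$. This says that any control we have on $f(t+\tau)-c^{m}f(t)$ transfers verbatim to control on $\|f(t+\tau)\|-\|f(t)\|$; in other words, every $(\epsilon,c^{m})$-period of $f$ is an $\epsilon$-period of $\|f\|$. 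The remaining work is simply to feed each of the three hypotheses into this estimate.

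For the $c$-almost periodic case I would take $m=1$: if $\tau\in\vartheta_{c}(f,\epsilon)$, then the inequality gives $|\,\|f(t+\tau)\|-\|f(t)\|\,|\leq\|f(t+\tau)-cf(t)\|\leq\epsilon$ for all $t\in I$, so $\tau\in\vartheta(\|f\|,\epsilon)$. Hence $\vartheta(\|f\|,\epsilon)\supseteq\vartheta_{c}(f,\epsilon)$, and since the latter is relatively dense in $[0,\infty)$ by $c$-almost periodicity, so is the former; thus $\|f\|$ is almost periodic. For the $c$-uniformly recurrent case I would again take $m=1$ and retain the same witness sequence $(\alpha_{n})$: passing to the supremum over $t\in I$ in the inequality gives $\sup_{t\in I}|\,\|f(t+\alpha_{n})\|-\|f(t)\|\,|\leq\|f(\cdot+\alpha_{n})-cf(\cdot)\|_{\infty}\to 0$ as $n\to\infty$, which is exactly uniform recurrence of $\|f\|$.

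Finally, for the semi-$c$-periodic case I would use the inequality in its full strength. Given $\epsilon>0$, pick the period $p>0$ supplied by Definition \ref{semi-ceanti}; then for any $m\in\mathbb{S}$ and any $x\in I$, setting $\tau=mp$ yields $|\,\|f(x+mp)\|-\|f(x)\|\,|\leq\|f(x+mp)-c^{m}f(x)\|\leq\epsilon$, which is precisely the defining condition for semi-periodicity (the case $c=1$ of Definition \ref{semi-ceanti}). I anticipate no genuine obstacle: the only observation that carries any weight is that $|c|=1$ is exactly what makes $\|c^{m}f(t)\|$ collapse to $\|f(t)\|$, after which one estimate settles all three cases simultaneously.
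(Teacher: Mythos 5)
Your proof is correct and follows exactly the paper's route: the paper establishes the same reverse-triangle-inequality estimate $\bigl|\,\|f(t+\tau)\|-\|f(t)\|\,\bigr|\leq\|f(t+\tau)-c^{m}f(t)\|$ (using $|c|=1$) immediately before the statement and declares that the result "simply follows." You have merely written out the three case-by-case verifications that the paper leaves implicit, and they are all accurate.
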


Further on,
we have ($x\in I,$ $\tau>0,$ $l \in {\mathbb N}$):
\begin{align*}
f\bigl(x&+l\tau\bigr)-c^{l}f(x)
\\&=\sum_{j=0}^{l-1}c^{j}\Bigl[ f\bigl(x+(l-j)\tau\bigr)- c f\bigl(x+(l-j-1)\tau\bigr)\Bigr].
\end{align*}
Hence,
$$
\Bigl\|f\bigl(\cdot+l\tau\bigr)-c^{l}f(\cdot)\Bigr\|_{\infty}\leq l\Bigl\|f\bigl(\cdot+\tau\bigr)- c f(\cdot)\Bigr\|_{\infty}.
$$
The above estimate can be used to prove the following:

\begin{prop}\label{uspevanje}
Let $f : I \rightarrow E$ be a $c$-almost periodic function ($c$-uniformly recurrent function, semi-$c$-periodic), and let $l\in {\mathbb N}.$
Then $f(\cdot)$ is $c^{l}$-almost periodic ($c^{l}$-uniformly recurrent, semi-$c^{l}$-periodic).
\end{prop}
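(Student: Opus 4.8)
The plan is to handle all three assertions with the single estimate
$$\bigl\|f(\cdot+l\tau)-c^{l}f(\cdot)\bigr\|_{\infty}\leq l\bigl\|f(\cdot+\tau)-cf(\cdot)\bigr\|_{\infty}$$
recorded immediately before the statement, which (since $|c|=1$) converts an $(\epsilon/l,c)$-period $\tau$ into an $(\epsilon,c^{l})$-period $l\tau.$ For the $c$-uniformly recurrent case this finishes the argument at once: if $(\alpha_{n})$ is a sequence realizing \eqref{rastvor} for $f(\cdot),$ then $(l\alpha_{n})$ is again strictly increasing with $l\alpha_{n}\to+\infty,$ and the displayed estimate gives $\|f(\cdot+l\alpha_{n})-c^{l}f(\cdot)\|_{\infty}\leq l\|f(\cdot+\alpha_{n})-cf(\cdot)\|_{\infty}\to 0.$

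For the semi-$c$-periodic case I would first invoke Proposition \ref{netodin} so as to quantify only over $m\in{\mathbb N}.$ Given $\epsilon>0,$ choose $p>0$ with $\|f(x+kp)-c^{k}f(x)\|\leq\epsilon$ for all $k\in{\mathbb N}$ and all $x\in I;$ then the choice $q:=lp$ together with the specialization $k=lm$ yields $\|f(x+mq)-(c^{l})^{m}f(x)\|=\|f(x+(lm)p)-c^{lm}f(x)\|\leq\epsilon$ for every $m\in{\mathbb N}$ and every $x\in I,$ which is exactly the defining condition for semi-$c^{l}$-periodicity.

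The one case demanding a genuine (if modest) extra step, and the place I expect the main obstacle, is $c$-almost periodicity, where I must produce a \emph{relatively dense} set of $(\epsilon,c^{l})$-periods rather than isolated ones. Fix $\epsilon>0;$ by hypothesis $\vartheta_{c}(f,\epsilon/l)$ is relatively dense, say every interval of length $L$ meets it, and the displayed estimate shows $l\cdot\vartheta_{c}(f,\epsilon/l)\subseteq\vartheta_{c^{l}}(f,\epsilon).$ It remains to check that the dilated set $l\cdot\vartheta_{c}(f,\epsilon/l)$ is itself relatively dense: any interval $[a,a+lL]$ of length $lL$ pulls back under division by $l$ to an interval $[a/l,a/l+L]$ of length $L,$ which contains some $\tau\in\vartheta_{c}(f,\epsilon/l),$ whence $l\tau\in[a,a+lL].$ Thus $\vartheta_{c^{l}}(f,\epsilon)$ is relatively dense with constant $lL$ and $f(\cdot)$ is $c^{l}$-almost periodic. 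The subtle point is precisely this preservation of relative density under the dilation $\tau\mapsto l\tau$ (the gaps are merely rescaled by $l$); overlooking it would leave one only with the weaker statement that each individual $l\tau$ is an $(\epsilon,c^{l})$-period, which is insufficient to conclude $c^{l}$-almost periodicity.
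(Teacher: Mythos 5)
Your proposal is correct and follows exactly the route the paper intends: the telescoping estimate $\|f(\cdot+l\tau)-c^{l}f(\cdot)\|_{\infty}\leq l\|f(\cdot+\tau)-cf(\cdot)\|_{\infty}$ displayed just before the statement is the paper's entire proof, with the three verifications left to the reader. Your filling-in of those details (in particular the observation that relative density is preserved under the dilation $\tau\mapsto l\tau$) is accurate and is precisely what the paper omits.
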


Consider now the following condition:
\begin{align}\label{rasta123456}
p\in {\mathbb Z} \setminus \{0\},\ q\in {\mathbb N},\ (p,q)=1 \mbox{ and }\arg(c)=\pi p/q.
\end{align}
The next corollary of Proposition \ref{uspevanje} follows immediately by plugging $l=q:$

\begin{cor}\label{uspeva}
Let $f : I \rightarrow E$ be a continuous function, and let \eqref{rasta123456} hold.
\begin{itemize}
\item[(i)] If $p$ is even and $f(\cdot)$ is $c$-almost periodic ($c$-uniformly recurrent, semi-$c$-periodic), then $f(\cdot)$ is almost periodic (uniformly recurrent, semi-periodic).
\item[(ii)] If $p$ is odd and $f(\cdot)$ is $c$-almost periodic ($c$-uniformly recurrent, semi-$c$-periodic), then $f(\cdot)$ is almost anti-periodic (uniformly anti-recurrent, semi-anti-periodic).
\end{itemize}
\end{cor}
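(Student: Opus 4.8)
The plan is to apply Proposition \ref{uspevanje} with the specific choice $l=q$ and then identify the resulting power $c^{q}$. By \eqref{rasta123456} we have $|c|=1$ and $\arg(c)=\pi p/q$, so $c=e^{i\pi p/q}$ and consequently
\[
c^{q}=e^{i\pi p}=(-1)^{p}.
\]
This single computation is the heart of the matter: the exponent $\pi p/q$ is arranged exactly so that the $q$-th power of $c$ collapses to $\pm 1$, with the sign dictated by the parity of $p$.

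First I would treat the case in which $p$ is even. Then $c^{q}=(-1)^{p}=1$, and Proposition \ref{uspevanje} (applied with $l=q$) asserts that a $c$-almost periodic (respectively, $c$-uniformly recurrent, semi-$c$-periodic) function $f(\cdot)$ is $c^{q}$-almost periodic, that is, $1$-almost periodic. Since $1$-almost periodicity coincides with ordinary almost periodicity---and likewise $1$-uniform recurrence with uniform recurrence, and semi-$1$-periodicity with semi-periodicity, as recorded in Definitions \ref{drmniga}, \ref{onakao} and \ref{semi-ceanti}---this yields statement (i).

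Next I would treat the case in which $p$ is odd, where $c^{q}=(-1)^{p}=-1$. The same application of Proposition \ref{uspevanje} now shows that $f(\cdot)$ is $(-1)$-almost periodic ($(-1)$-uniformly recurrent, semi-$(-1)$-periodic); by the conventions fixed immediately after Definition \ref{drmniga}, after Definition \ref{onakao}, and in the paragraph following Definition \ref{semi-ceanti}, these are precisely almost anti-periodicity, uniform anti-recurrence and semi-anti-periodicity, which gives statement (ii). I do not anticipate any genuine obstacle here: once the reduction $c^{q}=(-1)^{p}$ is in hand, both parts follow by reading the two special values $c=\pm 1$ against the terminology fixed in the definitions. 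The coprimality condition $(p,q)=1$ is not needed for the reduction itself; its only effect is to ensure that $q$ is the least exponent for which $c^{q}$ is real, so that the stated conclusion is the sharpest one available.
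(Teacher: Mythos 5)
Your proposal is correct and follows exactly the paper's route: the paper states that the corollary ``follows immediately by plugging $l=q$'' into Proposition \ref{uspevanje}, and your computation $c^{q}=e^{i\pi p}=(-1)^{p}$ together with the parity case split is precisely that argument spelled out. Nothing is missing.
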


Therefore, if $\arg(c)/\pi \in {\mathbb Q},$ then the class of $c$-almost periodic functions ($c$-uniformly recurrent functions, semi-$c$-periodic functions) is always contained in the class of almost periodic functions (uniformly recurrent functions, semi-periodic functions);
in particular, we have that 
any almost anti-periodic function (uniformly anti-recurrent function, semi-anti-periodic function) is almost periodic (uniformly recurrent, semi-periodic).

Now we will prove the following:

\begin{prop}\label{matsah}
Let $f : I \rightarrow E$ be a continuous function, and let $\arg(c)/\pi \notin {\mathbb Q}.$
\begin{itemize}
\item[(i)] If $f(\cdot)$ is $c$-almost periodic, then $f(\cdot)$ is $c'$-almost periodic for all $c'\in S_{1}.$ 
\item[(ii)] If $f(\cdot)$ is bounded and $c$-uniformly recurrent, then $f(\cdot)$ is $c'$-uniformly recurrent for all $c'\in S_{1}.$ 
\end{itemize}
\end{prop}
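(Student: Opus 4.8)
The plan is to exploit the density of the powers $\{c^{l} : l \in \mathbb{N}\}$ in $S_{1}$, guaranteed by Lemma \ref{rad98765} since $\arg(c)/\pi \notin \mathbb{Q}$, together with the fact (Proposition \ref{uspevanje}) that $c$-almost periodicity and $c$-uniform recurrence are inherited by every integer power $c^{l}$. In both parts I may assume $f \not\equiv 0$, since otherwise the claim is trivial, and I set $M := \sup_{t \in I}\|f(t)\| \in (0,\infty)$, which is finite by Proposition \ref{ogranicenost} in case (i) and by hypothesis in case (ii). The idea is then to approximate the target $c'$ by a suitable power $c^{l}$ and absorb the mismatch $|c^{l}-c'|$ using the uniform bound $M$.

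For part (i), fix $\epsilon > 0$ and $c' \in S_{1}$. Using Lemma \ref{rad98765} I would first select a single positive integer $l$ with $|c^{l} - c'| < \epsilon/(2M)$. By Proposition \ref{uspevanje} the function $f(\cdot)$ is $c^{l}$-almost periodic, so the set $\vartheta_{c^{l}}(f, \epsilon/2)$ is relatively dense in $[0,\infty)$. For any $\tau$ in that set the triangle inequality gives, for all $t \in I$,
\begin{align*}
\bigl\|f(t+\tau) - c'f(t)\bigr\| &\leq \bigl\|f(t+\tau) - c^{l}f(t)\bigr\| + |c^{l} - c'|\,\|f(t)\| \\
&\leq \frac{\epsilon}{2} + \frac{\epsilon}{2M}\cdot M = \epsilon,
\end{align*}
so that $\vartheta_{c^{l}}(f, \epsilon/2) \subseteq \vartheta_{c'}(f, \epsilon)$. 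Hence $\vartheta_{c'}(f, \epsilon)$ contains a relatively dense set and is therefore itself relatively dense; as $\epsilon>0$ and $c'\in S_{1}$ were arbitrary, $f(\cdot)$ is $c'$-almost periodic. Note that the bounded-gap property in Lemma \ref{rad98765} is not needed here, only density of the orbit.

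For part (ii), the extra difficulty is that the sequence witnessing $c^{l}$-uniform recurrence is $(l\alpha_{n})_{n}$, where $(\alpha_{n})$ realizes the $c$-uniform recurrence of $f$, so it depends on $l$, while $l$ must be taken large to force $c^{l}\to c'$. I would therefore build the witnessing sequence for $c'$ by a diagonal procedure, with the quantitative input being the estimate preceding Proposition \ref{uspevanje},
$$
\bigl\| f(\cdot + l\alpha_{n}) - c^{l}f(\cdot) \bigr\|_{\infty} \leq l\,\bigl\| f(\cdot + \alpha_{n}) - cf(\cdot) \bigr\|_{\infty},
$$
whose right-hand side tends to $0$ as $n \to \infty$ for each fixed $l$. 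Concretely, for each $k \in \mathbb{N}$ I first pick $l_{k} \in \mathbb{N}$ with $|c^{l_{k}} - c'| < 1/(2Mk)$ via Lemma \ref{rad98765}, and then, using that the displayed right-hand side vanishes in $n$, choose $n_{k}$ so large that $l_{k}\|f(\cdot+\alpha_{n_{k}}) - cf(\cdot)\|_{\infty} < 1/(2k)$ and simultaneously $l_{k}\alpha_{n_{k}} > \max(k,\beta_{k-1})$ (with $\beta_{0}:=0$). Setting $\beta_{k} := l_{k}\alpha_{n_{k}}$ and combining the two estimates by the triangle inequality yields $\|f(\cdot+\beta_{k}) - c'f(\cdot)\|_{\infty} < 1/k$, with $(\beta_{k})$ strictly increasing and diverging to $+\infty$; hence $f(\cdot)$ is $c'$-uniformly recurrent.

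The main obstacle, concentrated in part (ii), is the coordination in the diagonal step: the recurrence error is amplified by the factor $l_{k}$, which is itself forced to be large in order to approximate $c'$ well. What makes it work is the order of choices — $l_{k}$ is fixed first, after which the bound $l_{k}\|f(\cdot+\alpha_{n})-cf(\cdot)\|_{\infty}$ still tends to $0$ in $n$, so the amplification is always absorbable by taking $n_{k}$ large; the extra demand $l_{k}\alpha_{n_{k}} > \max(k,\beta_{k-1})$, available because $\alpha_{n}\to\infty$, then secures strict monotonicity and divergence of $(\beta_{k})$.
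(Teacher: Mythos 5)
Your proof of part (i) is correct and is essentially the paper's own argument: approximate $c'$ by a power $c^{l}$ via density of the orbit, invoke Proposition \ref{uspevanje} to get relative density of $\vartheta_{c^{l}}(f,\epsilon/2)$, and absorb $|c^{l}-c'|\,\|f\|_{\infty}$ by the triangle inequality. The paper in fact proves only (i) and leaves (ii) to the reader; your diagonal construction for (ii) — fixing $l_{k}$ first and then choosing $n_{k}$ so that the amplified error $l_{k}\|f(\cdot+\alpha_{n_{k}})-cf(\cdot)\|_{\infty}$ is small, while forcing $\beta_{k}=l_{k}\alpha_{n_{k}}$ to be strictly increasing and divergent — is a correct and complete way to supply the omitted half, resting on exactly the estimate the paper records before Proposition \ref{uspevanje}.
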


\begin{proof}
We will prove only (i). Clearly, it suffices to consider the case in which the function $f(\cdot)$ is not identical to zero.  Let $c'\in S_{1}$ and $\epsilon>0$ be fixed; then the prescribed assumption implies that the set $\{c^{l}: l\in {\mathbb N}\}$ is dense in $S_{1}$ and therefore there exists an increasing sequence  
$(l_{k})$ of positive integers such that 
$\lim_{k\rightarrow +\infty}c^{l_{k}}=c'.$  By Proposition \ref{ogranicenost}, the function $f(\cdot)$ is bounded;
let $k\in {\mathbb N}$ be such that $|c^{l_{k}}-c'|<\epsilon /(2\|f\|_{\infty}),$ and let $\tau>0$ be any $(\epsilon/2,c^{l_{k}})$-period for $f(\cdot).$
Then we have
\begin{align*}
\bigl\| f(x+\tau)-c'f(x)\bigr\| \leq \bigl\| f(x+\tau)-c^{l_{k}}f(x)\bigr\|+\bigl| c^{l_{k}}-c'\bigr| \cdot \|f\|_{\infty}<\epsilon/2+\epsilon/2=\epsilon,
\end{align*}  
for any $x\in I.$
This simply completes the proof.
\end{proof}

\begin{prop}\label{matsahari}
Let $f : I \rightarrow E$ be a continuous function. Then we have the following:
\begin{itemize}
\item[(i)] If $f(\cdot)$ is semi-$c$-periodic and $\arg(c)/\pi \in {\mathbb Q}$, then $f(\cdot)$ is $c'$-almost periodic for all $c'\in \{c^{l} : l\in {\mathbb N}\}.$ 
\item[(ii)] If $f(\cdot)$ semi-$c$-periodic and $\arg(c)/\pi \notin {\mathbb Q}$, then $f(\cdot)$ is $c'$-almost periodic for all $c'\in S_{1}.$ 
\end{itemize}
\end{prop}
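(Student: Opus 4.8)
The plan is to handle both parts through a single observation: semi-$c$-periodicity supplies, for any prescribed accuracy $\eta>0$, one length $p>0$ whose positive integer multiples $mp$ are simultaneously $(\eta,c^{m})$-periods of $f$, and proving $c'$-almost periodicity then reduces to selecting those indices $m$ for which $c^{m}$ equals (or closely approximates) the target $c'$ and checking that the corresponding periods are relatively dense. To set this up I would first record that a semi-$c$-periodic function is bounded (as noted right after Definition \ref{semi-ceanti}); writing $M:=\|f\|_{\infty}$ I may assume $M>0$, the case $f\equiv 0$ being trivial. By Proposition \ref{netodin}, for each $\eta>0$ there is $p>0$ such that $mp\in\vartheta_{c^{m}}(f,\eta)$ for every $m\in{\mathbb N}$.

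For (i) I would fix $l\in{\mathbb N}$ and $\epsilon>0$. Since $\arg(c)/\pi\in{\mathbb Q}$, the number $c$ is a root of unity of some order $N$, so that $c^{m}=c^{l}$ exactly when $m\equiv l\pmod{N}$. Choosing $p>0$ for the accuracy $\eta=\epsilon$, every index $m=l+kN$ with integer $k\geq 0$ then gives $(l+kN)p\in\vartheta_{c^{l}}(f,\epsilon)$; these numbers form an arithmetic progression with step $Np$, so $\vartheta_{c^{l}}(f,\epsilon)$ is relatively dense in $[0,\infty)$ and $f(\cdot)$ is $c^{l}$-almost periodic.

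For (ii) I would fix $c'\in S_{1}$ and $\epsilon>0$, choose $p>0$ for the accuracy $\eta=\epsilon/2$, and then invoke Lemma \ref{rad98765} with accuracy $\delta:=\epsilon/(2M)$ to obtain a strictly increasing sequence $(l_{k})$ of positive integers with $G:=\sup_{k}(l_{k+1}-l_{k})<\infty$ and $|c^{l_{k}}-c'|<\delta$ for all $k$. The triangle inequality then yields, for all $x\in I$,
$$\bigl\|f(x+l_{k}p)-c'f(x)\bigr\|\leq\bigl\|f(x+l_{k}p)-c^{l_{k}}f(x)\bigr\|+\bigl|c^{l_{k}}-c'\bigr|\,\|f(x)\|\leq\frac{\epsilon}{2}+\delta M=\epsilon,$$
so each $l_{k}p$ lies in $\vartheta_{c'}(f,\epsilon)$; since consecutive $l_{k}$ differ by at most $G$, these periods have gaps at most $Gp$ and the set is relatively dense, whence $f(\cdot)$ is $c'$-almost periodic.

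The main obstacle, and the only step that is not a routine estimate, is securing the relative density of the selected periods. This is precisely where Lemma \ref{rad98765} is indispensable in part (ii): bare density of the orbit $\{c^{m}:m\in{\mathbb N}\}$ in $S_{1}$ would only produce a \emph{dense} set of periods, whereas the bounded-gap (syndetic) selection guaranteed by the lemma is exactly what upgrades the approximation $c^{l_{k}}\to c'$ into relative density of $\{l_{k}p\}$. In part (i) the finite order of $c$ plays the same role, forcing the admissible periods onto a genuine arithmetic progression.
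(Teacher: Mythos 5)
Your argument is correct and follows essentially the same route as the paper: part (ii) is virtually identical (semi-$c$-periodicity gives the periods $l_{k}p$, Lemma \ref{rad98765} gives the syndetic approximating sequence, and the triangle inequality with the bound $\|f\|_{\infty}$ closes the estimate), and your part (i) is the same idea of selecting an arithmetic progression of indices $m$ with $c^{m}$ equal to the target power. The only cosmetic difference is that in (i) you treat each $c^{l}$ directly via the residue class $m\equiv l\pmod{N}$, whereas the paper first establishes $c$-almost periodicity using $m=1+2lq$ and then invokes Proposition \ref{uspevanje}; both are equally valid.
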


\begin{proof}
Let $\epsilon>0$ be fixed. 
To prove (i), it suffices to show that $f(\cdot)$ is $c$-almost periodic (see Proposition \ref{uspevanje}).
Since $\arg(c)/\pi \in {\mathbb Q}$ and \eqref{rasta123456} holds, then we have $c^{1+2lq}=c$ for all $l\in {\mathbb N}.$
Then there exists $p>0$ such that, for every $m\in {\mathbb N}$ and $x\in I,$ we have
$\|f(x+mp)-c^{m}f(x)\|\leq \epsilon. $ With $m=1+2lq, $ we have $\|f(x+(1+2lq)p)-c^{1+2lq}f(x)\|
=\|f(x+(1+2lq)p)-cf(x)\|\leq \epsilon$ so that the conclusion follows from the fact that the set $\{(1+2lq)p : l\in {\mathbb N}\}$
is relatively dense in $[0,\infty).$
Assume now that
$\arg(c)/\pi \notin {\mathbb Q}.$ To prove (ii), it suffices to consider case $f\neq 0.$ Observe first that Lemma \ref{rad98765}
yields that 
there exists a strictly increasing sequence $(l_{k})$ of positive integers such that $\sup_{k\in {\mathbb N}}(l_{k+1}-l_{k})<\infty$ and $|c^{l_{k}}-c'|<\epsilon/\|f\|_{\infty}$
for all $k\in {\mathbb N}.$
With this sequence and the number $p>0$ chosen as above, we have:
\begin{align*}
\bigl\|f(x+pl_{k})-c'f(x)\bigr\|&\leq \bigl\|f(x+pl_{k})-c^{l_{k}}f(x)\bigr\| +\bigl| c^{l_{k}}-c'\bigr| \|f\|_{\infty}
\\& \leq \epsilon +\epsilon \|f\|_{\infty}/\|f\|_{\infty}=2\epsilon,\quad x\in I,\ k\in {\mathbb N}.
\end{align*}
Since the set $\{pl_{k} : k\in {\mathbb N}\}$ is relatively dense in $[0,\infty),$
the proof is completed.
\end{proof}

In connection with Proposition \ref{matsahari}(ii), it is natural to ask whether the assumptions that the function $f(\cdot)$ is semi-$c$-periodic and $\arg(c)/\pi \notin {\mathbb Q}$ imply that $f(\cdot)$ is semi-$c'$-almost periodic for all $c'\in S_{1}?$ 

We continue by providing the
following extension of \cite[Theorem 2.2]{krag} (see also \cite[pp. 3-4]{besik}):

\begin{thm}\label{krewfar}
Let $f  : I\rightarrow E$ be $c$-almost periodic ($c$-uniformly recurrent, semi-$c$-periodic), and let $\alpha \in {\mathbb C}.$ Then we have:
\begin{itemize}
\item[(i)] $\alpha f(\cdot)$ is $c$-almost periodic ($c$-uniformly recurrent, semi-$c$-periodic).
\item[(ii)] If $E={\mathbb C}$ and $\inf_{x\in {\mathbb R}}|f(x)|=m>0,$ then $1/f(\cdot)$ is $1/c$-almost periodic ($1/c$-uniformly recurrent, semi-$1/c$-periodic).
\item[(iii)] If $(g_{n}: I \rightarrow E)_{n\in {\mathbb N}}$ is a sequence of $c$-almost periodic functions ($c$-uniformly recurrent functions, semi-$c$-periodic functions) and $(g_{n})_{n\in {\mathbb N}}$ converges uniformly to a function $g: I \rightarrow E$, then
$g(\cdot)$ is $c$-almost periodic ($c$-uniformly recurrent, semi-$c$-periodic).
\item[(iv)] If $a\in I$ and $b\in I \setminus \{0\},$ then the functions $f(\cdot+a)$ and $f(b\, \cdot)$ are likewise $c$-almost periodic ($c$-uniformly recurrent, semi-$c$-periodic).
\end{itemize}
\end{thm}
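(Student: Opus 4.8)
The plan is to treat the three notions (c-almost periodic, c-uniformly recurrent, semi-c-periodic) in parallel, handling the four claims one at a time. For each claim the strategy is the same: start from the defining inequality for the given function and algebraically manipulate it into the defining inequality for the target function, with the factors of $c$ bookkept carefully. The guiding identities are the standard ones: for scalars $\alpha$ and periods, multiplying a difference $f(t+\tau)-cf(t)$ by a fixed constant scales the estimate, and inverting a nonvanishing scalar function converts a $c$-shift into a $1/c$-shift.

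For (i), I would simply observe that $\|\alpha f(t+\tau)-c\,\alpha f(t)\| = |\alpha|\cdot\|f(t+\tau)-cf(t)\|$, so any $(\epsilon,c)$-period for $f$ is an $(|\alpha|\epsilon,c)$-period for $\alpha f$ (and the analogous statement holds for the recurrence sequence $(\alpha_n)$ and for the semi-$c$-periodic quantifier block with $c^m$ in place of $c$); the case $\alpha=0$ is trivial. For (ii), with $E=\mathbb{C}$ and $\inf|f|=m>0$, I would write
\begin{align*}
\Bigl\|\frac{1}{f(t+\tau)}-\frac{1}{c}\frac{1}{f(t)}\Bigr\|
=\frac{\bigl\|cf(t)-f(t+\tau)\bigr\|}{|c|\,|f(t+\tau)|\,|f(t)|}
\leq \frac{1}{m^{2}}\bigl\|f(t+\tau)-cf(t)\bigr\|,
\end{align*}
using $|c|=1$; thus an $(\epsilon m^2,c)$-period for $f$ becomes an $(\epsilon,1/c)$-period for $1/f$, and for the recurrence case the same bound forces $\|1/f(\cdot+\alpha_n)-(1/c)(1/f(\cdot))\|_\infty\to 0$. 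For the semi-$c$-periodic case one replaces $c$ by $c^m$ and notes $1/c^m=(1/c)^m$ with $|c^m|=1$.

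For (iii), uniform convergence lets me approximate: given $\epsilon>0$ pick $N$ with $\|g_N-g\|_\infty<\epsilon/3$, and then any $(\epsilon/3,c)$-period $\tau$ for $g_N$ satisfies, via the triangle inequality $\|g(t+\tau)-cg(t)\|\le \|g(t+\tau)-g_N(t+\tau)\|+\|g_N(t+\tau)-cg_N(t)\|+|c|\,\|g_N(t)-g(t)\|<\epsilon$, an $(\epsilon,c)$-period for $g$, so relative density transfers; here $g$ is continuous as a uniform limit of continuous functions. The recurrence and semi-$c$-periodic cases are handled by the same three-term split (the latter using $|c^m|=1$). For (iv), translation is immediate since the inequality $\|f((t+a)+\tau)-cf(t+a)\|$ is just the original inequality evaluated at the shifted argument, so the same $\epsilon$-periods and the same sequence $(\alpha_n)$ work; for the dilation $f(b\,\cdot)$ with $b>0$, an $(\epsilon,c)$-period $\tau$ for $f$ yields the period $\tau/b$ for $f(b\,\cdot)$, and relative density is preserved under this positive rescaling, while for recurrence the sequence $(\alpha_n/b)$ works.

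I expect the only real obstacle to be bookkeeping in the semi-$c$-periodic case, where the relevant quantity is $f(x+mp)-c^mf(x)$ and the factor $c^m$ (rather than $c$) must be tracked through each manipulation; since $|c|=1$ gives $|c^m|=1$, all the estimates above go through verbatim with $c$ replaced by $c^m$, so this is a notational rather than a conceptual difficulty. One should also note in (iv) the dilation subtlety: the semi-$c$-periodicity of $f$ with period $p$ gives semi-$c$-periodicity of $f(b\,\cdot)$ with period $p/b$, which is immediate from substituting $x\mapsto bx$ in the defining inequality.
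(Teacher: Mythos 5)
The paper itself gives no proof of Theorem \ref{krewfar}: it is stated as a direct extension of \cite[Theorem 2.2]{krag} and of the classical arguments in \cite[pp.~3--4]{besik}, so your write-up is essentially the argument the authors intend, and parts (i), (ii) and (iii) for the almost-periodic and semi-$c$-periodic classes are correct as you present them (the identity $|c|=|c^{m}|=1$ is indeed the only thing that needs tracking). Two points deserve tightening. First, in (iii) for the $c$-uniformly recurrent class, each $g_{n}$ comes with its own sequence, and "the same three-term split" only shows that for every $\epsilon>0$ there are arbitrarily large $\alpha$ with $\sup_{t}\|g(t+\alpha)-cg(t)\|<\epsilon$; to produce a single strictly increasing sequence $(\alpha_{n})\to+\infty$ witnessing $c$-uniform recurrence of $g$ you need the standard diagonal extraction ($\alpha_{k}$ chosen from the sequence of $g_{N_{k}}$ with $\|g_{N_{k}}-g\|_{\infty}<1/(3k)$ and $\alpha_{k}>\max(k,\alpha_{k-1})$). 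This is routine but is a genuine step, not just notation.

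Second, and more substantively: in (iv) you only treat the dilation for $b>0$, while the statement allows any $b\in I\setminus\{0\}$, hence negative $b$ when $I={\mathbb R}$. For $b<0$ the substitution $\tau\mapsto\tau/b$ produces a \emph{negative} shift, and the elementary computation ($t\mapsto t-\tau$ in the defining inequality, then multiplying by $c^{-1}$) shows that a positive $(\epsilon,c)$-period of $f$ becomes an $(\epsilon,1/c)$-period in the negative direction; equivalently $f(b\,\cdot)=\check{f}(|b|\,\cdot)$ and, by Proposition \ref{durak-durak}, $\check{f}$ is $1/c$-almost periodic, not $c$-almost periodic. So to rescue the claim for $b<0$ one must additionally invoke the fact that $c$-almost periodicity implies $1/c$-almost periodicity, which does hold (via Proposition \ref{uspevanje} with $l=2q-1$ when $\arg(c)=\pi p/q$, and via Proposition \ref{matsah} when $\arg(c)/\pi\notin{\mathbb Q}$), but is not a one-line rescaling; and for an \emph{unbounded} $c$-uniformly recurrent $f$ with $\arg(c)/\pi\notin{\mathbb Q}$ the passage from $c$ to $1/c$ is not covered by Proposition \ref{matsah}(ii) at all, so that case of (iv) is genuinely open in your argument (and arguably in the theorem as stated). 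You should either restrict to $b>0$ or supply the $\check{f}$ reduction explicitly.
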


A function $f : I \rightarrow E$ is said to be $c$-periodic if and only if there exists $p>0$ such that $f(x+p)=cf(x)$ for all $x\in I.$
Keeping in mind Theorem \ref{krewfar}(iii) and the proofs of \cite[Lemma 1, Theorem 1]{andres1}, we can clarify the following extension of \cite[Proposition 3]{chelj}:

\begin{thm}\label{petruciani}
Let $f\in C_{b}(I :E).$ Then $f(\cdot)$ is semi-$c$-periodic  if and only if  there exists a sequence $(f_{n})$ of $c$-periodic functions in $C_{b}(I :E)$ such that $\lim_{n\rightarrow \infty}f_{n}(x)=f(x)$ uniformly in $I.$   
\end{thm}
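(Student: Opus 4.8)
The plan is to prove the equivalence in Theorem~\ref{petruciani} by establishing each implication separately, treating the uniform-limit direction as the easy half and the structural-decomposition direction as the substantive half.

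\textbf{Sufficiency.} First I would assume that there is a sequence $(f_{n})$ of $c$-periodic functions in $C_{b}(I:E)$ converging uniformly to $f(\cdot)$. Each $c$-periodic function is trivially semi-$c$-periodic: if $f_{n}(x+p_{n})=cf_{n}(x)$ for all $x\in I$, then iterating gives $f_{n}(x+mp_{n})=c^{m}f_{n}(x)$ exactly, so $f_{n}$ satisfies Definition~\ref{semi-ceanti} with the left-hand side equal to zero. Hence it suffices to show that the space $\mathcal{SAP}_{c}(I:E)$ is closed under uniform limits. This is a routine $\epsilon/3$ argument: given $\epsilon>0$, choose $n$ with $\|f-f_{n}\|_{\infty}<\epsilon/3$, choose the semi-$c$-period $p$ for $f_{n}$ corresponding to $\epsilon/3$, and estimate
\begin{align*}
\|f(x+mp)-c^{m}f(x)\| &\leq \|f(x+mp)-f_{n}(x+mp)\| \\
&\quad + \|f_{n}(x+mp)-c^{m}f_{n}(x)\| + |c|^{m}\|f_{n}(x)-f(x)\|,
\end{align*}
where each term is at most $\epsilon/3$ because $|c|^{m}=1$. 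This gives $f\in\mathcal{SAP}_{c}(I:E)$.

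\textbf{Necessity.} The harder direction is to show that a semi-$c$-periodic $f\in C_{b}(I:E)$ is a uniform limit of $c$-periodic functions. Here I would follow the strategy of the proofs of \cite[Lemma 1, Theorem 1]{andres1} for the classical ($c=1$) case, adapting the construction by carrying the factor $c^{m}$ through. The core idea is that semi-$c$-periodicity provides, for each $\epsilon_{n}=1/n$, a number $p_{n}>0$ such that $\|f(x+mp_{n})-c^{m}f(x)\|\leq 1/n$ for all $m\in\mathbb{N}$ and $x\in I$. From these approximate periods one constructs genuine $c$-periodic functions $f_{n}$ — typically by an averaging or a direct periodization procedure over the ``period'' $p_{n}$ — that inherit boundedness and converge uniformly to $f$. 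The substitution $g(x):=c^{-x/p}f(x)$ (formally replacing $c^{m}$ by a continuous exponential factor) would reduce the semi-$c$-periodic condition to ordinary semi-periodicity of $g$ when $\arg(c)/\pi$ is handled appropriately, letting me quote the Andres--Pennequin machinery directly and then transfer back.

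\textbf{Main obstacle.} I expect the principal difficulty to lie in making the periodization construction in the necessity direction produce functions that are exactly $c$-periodic rather than merely approximately so, while simultaneously controlling the uniform error and preserving continuity and boundedness. The factor $c^{m}$ complicates the standard Andres--Pennequin argument because one cannot simply average translates $f(\cdot+mp_{n})$; instead one must average the corrected translates $c^{-m}f(\cdot+mp_{n})$, and verify that the resulting limit genuinely satisfies the exact relation $f_{n}(x+p_{n})=cf_{n}(x)$. Handling the case $I=[0,\infty)$ versus $I=\mathbb{R}$ uniformly, and checking that the reduction to the $c=1$ case via the exponential substitution respects the required regularity, is where the careful bookkeeping will be needed; everything else follows the established template.
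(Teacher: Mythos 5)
Your proposal takes essentially the same route as the paper, whose own proof of Theorem \ref{petruciani} consists precisely of the two ingredients you use: sufficiency via the closedness of $\mathcal{SAP}_{c}(I:E)$ under uniform limits (Theorem \ref{krewfar}(iii)), and necessity by adapting the constructions of \cite[Lemma 1, Theorem 1]{andres1}. The one obstacle you flag in the necessity direction is not a real one: since $f$ is almost periodic (Proposition \ref{matsahari}), the corrected averages $\frac{1}{N}\sum_{m=0}^{N-1}c^{-m}f(x+mp)$ converge uniformly in $x$ as $N\to\infty$, and the limit is continuous, bounded, exactly $c$-periodic with period $p$, and within $\varepsilon$ of $f$ in the sup-norm.
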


We continue by providing two illustrative examples:

\begin{example}\label{kader} (see also \cite[Example 2.2]{krag})
The function $f : {\mathbb R} \rightarrow {\mathbb R}$ given by $f(t):=\cos t,$ $t\in {\mathbb R}$ is $c$-almost periodic ($c$-uniformly recurrent) if and only if $c=\pm 1,$ while $f(\cdot)$ is semi-$c$-periodic if and only if $c=1;$ the function $f_{\varphi} : {\mathbb R} \rightarrow {\mathbb R}$ given by $f_{\varphi}(t):=e^{it\varphi},$ $t\in {\mathbb R}$ ($\varphi \in (-\pi,\pi] \setminus \{0\}$) is $c$-almost periodic (semi-$c$-periodic) for any $c\in S_{1},$
while the function $f_{0}(\cdot)$ is $c$-almost periodic ($c$-uniformly recurrent, semi-$c$-periodic) if and only if $c=1.$
Consider now the function $g : {\mathbb R} \rightarrow {\mathbb R}$ given by $g(t):=2^{-1}\cos 4t +2\cos 2t,$ $t\in {\mathbb R}.$ Then we know that the function $g(\cdot)$ is (almost) periodic and not almost anti-periodic.
Now we will prove that $g(\cdot)$ is $c$-almost periodic ($c$-uniformly recurrent, semi-$c$-periodic) if and only if $c=1.$ Suppose that $(\alpha_{n})$ is a strictly increasing sequence tending to plus infinity such that ($c=e^{i\alpha},$ $\alpha \in (-\pi,\pi]$):
$$
\lim_{n\rightarrow +\infty}\sup_{t\in {\mathbb R}}\Bigl|2^{-1}\cos \bigl(4t +\alpha_{n}\bigr) 2\cos \bigl(2t +\alpha_{n}\bigr)-e^{i\alpha}\bigl[ 2^{-1}\cos 4t +2\cos 2t\bigr] \Bigr|=0.
$$ 
With $t=\pi,$ the above implies
\begin{align}\label{reper}
\lim_{n\rightarrow +\infty}[\cos 4 \alpha_{n}+4\cos 2\alpha_{n}-5\cos \alpha]=0\ \ \mbox{ and }\ \ \lim_{n\rightarrow +\infty}5\sin \alpha=0,
\end{align}
which immediately yields $\alpha=0$ or $\alpha=\pi.$ In the second case, the contradiction is obvious since the first limit equation in \eqref{reper} cannot be fulfilled, while the case $\alpha=0$ is possible and equivalent with the usual almost periodicity of $g(\cdot).$
\end{example}

\begin{example}\label{strina1} (see also \cite[Example 1]{andres1} and \cite[Example 4, Example 5]{chelj})
Let $p$ and $q$ be odd natural numbers such that $p-1 \equiv 0 \  (mod \ q),$ and let $c=e^{i\pi p/q}.$ 
The function
$$
f(x):=\sum_{n=1}^{\infty}\frac{e^{ix/(2nq+1)}}{n^{2}},\quad x\in {\mathbb R} 
$$
is semi-$c$-periodic because it is a uniform limit of $[\pi \cdot (1+2q)\cdot \cdot \cdot (1+2Nq)]$-periodic functions
$$
f_{N}(x):=\sum_{n=1}^{N}\frac{e^{ix/(2nq+1)}}{n^{2}},\quad x\in {\mathbb R} \ \ (N\in {\mathbb N}).
$$
\end{example}

Now we will state and prove the following 

\begin{prop}\label{gruskamiroslavglisiclord}
Suppose that $f : I\rightarrow {\mathbb R}$ is $c$-uniformly recurrent (semi-$c$-periodic) and $f\neq  0.$ Then $c=\pm 1$ and moreover, if $f(t)\geq 0$ for all $t\in I,$ then $c=1.$ 
\end{prop}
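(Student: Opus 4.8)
The plan is to exploit the fact that $f$ takes real values, which forces the imaginary part of $cf(x)$ to be negligible in the recurrence (resp. periodicity) estimates. Since the standing hypothesis of this section gives $|c|=1$, it suffices to prove that $\IM(c)=0$ in order to conclude $c=\pm1$. I would first treat the $c$-uniformly recurrent case. By Definition \ref{onakao} there is a strictly increasing sequence $(\alpha_{n})$ with $\alpha_{n}\to+\infty$ and $\|f(\cdot+\alpha_{n})-cf(\cdot)\|_{\infty}\to 0$. Fix $x\in I$. Because $f(x+\alpha_{n})\in{\mathbb R}$ and $f(x)\in{\mathbb R}$, we have $\IM\bigl(f(x+\alpha_{n})-cf(x)\bigr)=-\IM(c)f(x)$, so that $|\IM(c)|\,|f(x)|\leq \bigl|f(x+\alpha_{n})-cf(x)\bigr|\leq \|f(\cdot+\alpha_{n})-cf(\cdot)\|_{\infty}$. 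Letting $n\to\infty$ yields $\IM(c)f(x)=0$ for every $x\in I$; since $f\neq 0$ there is $x_{0}$ with $f(x_{0})\neq 0$, whence $\IM(c)=0$, and together with $|c|=1$ this gives $c=\pm 1$.

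For the semi-$c$-periodic case the argument is the same, with the defining estimate of Definition \ref{semi-ceanti} specialized to $m=1$: for each $\varepsilon>0$ there is $p>0$ with $\|f(x+p)-cf(x)\|\leq\varepsilon$ for all $x\in I$, hence $|\IM(c)|\,|f(x)|\leq\varepsilon$ for all $x$. Letting $\varepsilon\downarrow 0$ again forces $\IM(c)=0$ and therefore $c=\pm 1$. This settles the first assertion in both cases.

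To finish, suppose in addition that $f(t)\geq 0$ for all $t\in I$, and assume toward a contradiction that $c=-1$. In the uniformly recurrent case we then have $\|f(\cdot+\alpha_{n})+f(\cdot)\|_{\infty}\to 0$; but $f(x+\alpha_{n})+f(x)\geq f(x)\geq 0$, so $\sup_{x\in I}f(x)\leq\|f(\cdot+\alpha_{n})+f(\cdot)\|_{\infty}\to 0$, which forces $f\equiv 0$ and contradicts $f\neq 0$. Hence $c=1$. The semi-$(-1)$-periodic case is handled identically by specializing the estimate to $m=1$, giving $\sup_{x\in I}f(x)\leq\varepsilon$ for every $\varepsilon>0$ and the same contradiction. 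The argument is elementary throughout; the only point requiring care, and the step I would single out as the one to state precisely, is that the pointwise inequalities (taking imaginary parts, resp. using the lower bound $f(x+\alpha_{n})+f(x)\geq f(x)$) must be established for each fixed $x$ before passing to the supremum and to the limit, so that the vanishing of the $\|\cdot\|_{\infty}$-estimate can be transferred to $\IM(c)f(x)$ and to $\sup_{x}f(x)$ respectively.
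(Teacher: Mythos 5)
Your proof is correct, and it rests on the same basic idea as the paper's -- namely, that for real-valued $f$ the quantity $f(t+\tau)-cf(t)$ has imaginary part $-\IM(c)f(t)=-\sin\varphi\cdot f(t)$, so the recurrence estimate forces $\sin\varphi\cdot f$ to be small -- but your execution is noticeably more direct. The paper first proves that $f\notin C_{0}(I:E)$ and extracts a sequence $(t_{l})\to+\infty$ along which $|f(t_{l})|\geq a/2>0$, and only then divides the bound $|\sin\varphi\cdot f(t_{l})|\leq\epsilon$ by $|f(t_{l})|$ to conclude $\sin\varphi=0$; the same sequence is reused for the positivity assertion. You bypass all of that: since $|\IM(c)|\,|f(x)|\leq\|f(\cdot+\alpha_{n})-cf(\cdot)\|_{\infty}$ holds for each fixed $x$ with the left side independent of $n$, letting $n\to\infty$ gives $\IM(c)f(x)=0$ pointwise, and a single point where $f$ does not vanish suffices; likewise $\sup_{x}f(x)\leq\|f(\cdot+\alpha_{n})+f(\cdot)\|_{\infty}\to 0$ settles the case $c=-1$, $f\geq 0$ with no lower bound on $|f|$ needed. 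What the paper's longer route buys is the intermediate fact $f\notin C_{0}(I:E)$, which it records separately as Proposition \ref{balija} "by the proof of" this result; your argument does not yield that statement and one would have to prove it independently (the short argument at the start of the paper's proof does the job). On the other hand, you treat the semi-$c$-periodic case explicitly by specializing the definition to $m=1$, whereas the paper declares it omitted, so your write-up is in that respect more complete. One small point of care, which you in fact flag yourself: the pointwise inequality must be stated before taking suprema, and you do this correctly.
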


\begin{proof}
We will consider the class of $c$-uniformly recurrent functions, only, when
we may assume without loss of generality that $I=[0,\infty).$ Then $f\notin C_{0}([0,\infty) : {\mathbb R});$ namely, if we suppose the contrary, then there exists a strictly increasing sequence $(\alpha_{n})$ of positive real numbers such that $\lim_{n\rightarrow +\infty}\alpha_{n}=+\infty$ and \eqref{rastvor} holds. In particular, for every fixed number $t_{0}\geq 0$ we have
$
\lim_{n\rightarrow +\infty}| f(t_{0} +\alpha_{n})-cf(t_{0})|=0.
$ This automatically yields $f(t_{0})=0$ and, since $t_{0}\geq 0$ was arbitrary, we get $f=0$ identically, which is a contradiction.
Therefore, there exist a strictly increasing sequence $(t_{l})_{l\in {\mathbb N}}$ of positive real numbers tending to plus infinity and a positive real number $a\geq \limsup_{t\rightarrow +\infty}|f(t)|>0$ such that $|f(t_{l})|\geq a/2$ for all $l\in {\mathbb N}.$ Let $\epsilon>0$ be fixed. Then there exist two real numbers $t_{0}>0$ and $n_{0}\in {\mathbb N}$ such that
$|f(t+\alpha_{n})-f(t)|\leq \epsilon$ for all $t\geq t_{0}$ and $n\geq n_{0}.$ If $\arg(c)=\varphi \in (-\pi,\pi],$ then we particularly get that for each $t\geq t_{0}$ and $n\geq n_{0}$ we have:
$$
\bigl| f(t+\alpha_{n})-\cos \varphi \cdot f(t)\bigr| \leq \epsilon \ \ \mbox{ and }\ \ \bigl| \sin \varphi \cdot f(t)\bigr| \leq \epsilon.
$$
Plugging in the second estimate $t=t_{l}$ for a sufficiently large $l\in {\mathbb N}$ we get that $|\sin \varphi| \leq 2\epsilon/a.$ Since $\epsilon>0$ was arbitrary, we get $\sin \varphi=0$ and $c=\pm 1.$ Suppose, finally, that $f(t)\geq 0$ for all $t\geq 0$ and $c=-1.$ Then we have 
$ f(t+\alpha_{n})+f(t)\leq 2\epsilon
$ for all $t\geq t_{0}$ and $n\geq n_{0}.$ Plugging again $t=t_{l}$ for a sufficiently large $l\in {\mathbb N}$ we get that $a\leq \epsilon$ for all $\epsilon>0$ and therefore $a=0,$ which is a contradiction. 
\end{proof}

By the proof of Proposition \ref{gruskamiroslavglisiclord}, we have:

\begin{prop}\label{balija}
Suppose that $f : I\rightarrow E$ is $c$-uniformly recurrent (semi-$c$-periodic) and $f\neq  0.$ Then $f\notin C_{0}(I : E).$
\end{prop}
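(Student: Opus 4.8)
The plan is to argue by contradiction, assuming that $f \in C_{0}(I:E)$, and to deduce $f\equiv 0$, contradicting the hypothesis $f\neq 0$. The whole argument rests on one elementary observation: since $|c|=1$, we have $\|c^{m}x\|=\|x\|$ for every $x\in E$ and every $m$, so that the oscillation condition defining either class forces the value of $f$ at a fixed point to be controlled by values of $f$ arbitrarily far out, where $\|f(\cdot)\|$ is negligible because $f\in C_{0}(I:E)$.

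First I would treat the $c$-uniformly recurrent case, which is essentially the first paragraph of the proof of Proposition \ref{gruskamiroslavglisiclord} carried out for a general Banach space $E$ in place of $\mathbb{R}$. Fix an arbitrary $t_{0}\in I$. By \eqref{rastvor} there is a strictly increasing sequence $(\alpha_{n})$ with $\alpha_{n}\to +\infty$ and $\|f(\cdot+\alpha_{n})-cf(\cdot)\|_{\infty}\to 0$; in particular $\|f(t_{0}+\alpha_{n})-cf(t_{0})\|\to 0$. Since $t_{0}+\alpha_{n}\to +\infty$ and $f\in C_{0}(I:E)$, we have $f(t_{0}+\alpha_{n})\to 0$, whence $cf(t_{0})=0$ and thus $f(t_{0})=0$ because $c\neq 0$. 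As $t_{0}\in I$ was arbitrary, $f\equiv 0$.

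For the semi-$c$-periodic case I would argue directly from Definition \ref{semi-ceanti} (invoking Proposition \ref{netodin}, so that it suffices to use $m\in {\mathbb N}$). Fix $t_{0}\in I$ and $\epsilon>0$, and let $p>0$ be as in the definition, so that $\|f(t_{0}+mp)-c^{m}f(t_{0})\|\leq \epsilon$ for all $m\in {\mathbb N}$. Using $\|c^{m}f(t_{0})\|=\|f(t_{0})\|$, the triangle inequality gives $\|f(t_{0})\|\leq \|f(t_{0}+mp)\|+\epsilon$ for every $m$. Letting $m\to \infty$ and using $f\in C_{0}(I:E)$ together with $t_{0}+mp\to +\infty$ yields $\|f(t_{0})\|\leq \epsilon$; since $\epsilon>0$ was arbitrary, $f(t_{0})=0$, and again $f\equiv 0$.

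In both cases the conclusion $f\equiv 0$ contradicts $f\neq 0$, which completes the proof. I do not expect any genuine obstacle here: the only points requiring a word of care are that the estimate used in Proposition \ref{gruskamiroslavglisiclord} was written for real-valued functions and must be observed to hold verbatim for $E$-valued functions, and that the semi-$c$-periodic case, not spelled out there, must be supplied separately, although it is equally immediate once one exploits $|c|=1$.
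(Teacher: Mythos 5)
Your proposal is correct and follows essentially the same route as the paper, which simply invokes the first paragraph of the proof of Proposition \ref{gruskamiroslavglisiclord}: there the assumption $f\in C_{0}$ combined with $\lim_{n\to\infty}\|f(t_{0}+\alpha_{n})-cf(t_{0})\|=0$ forces $f(t_{0})=0$ for every $t_{0}$, exactly as you argue, and the passage from $\mathbb{R}$-valued to $E$-valued functions is indeed verbatim. Your separate direct treatment of the semi-$c$-periodic case (using $|c|=1$ and letting $m\to\infty$) is a harmless addition; the paper leaves that case implicit, since semi-$c$-periodicity yields $c$-uniform recurrence via Propositions \ref{matsahari} and \ref{drmniga}-type considerations, but your self-contained argument is equally valid.
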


We continue by providing some illustrative applications of Proposition \ref{gruskamiroslavglisiclord}:

\begin{example}\label{gruskavlade}
\begin{itemize}
\item[(i)]
The function $f: {\mathbb R}\rightarrow {\mathbb R},$ given by
\begin{align}\label{gader}
f(t):=\sum_{n=1}^{\infty}\frac{1}{n}\sin^{2}\Bigl(\frac{t}{2^{n}} \Bigr)\, dt,\quad t\in {\mathbb R},
\end{align}
is unbounded, uniformly continuous and uniformly recurrent (see Haraux and Souplet \cite[Theorem 1.1]{haraux}). Then $f(\cdot)$ is $c$-uniformly recurrent
if and only if $c=1.$ 
\item[(ii)] Let $\tau_{1}:=1,\ \tau_{2}>2$ and let the sequence $(\tau_{n})_{n\in {\mathbb N}}$
of positive real numbers satisfy $\tau_{n}>2\sum_{i=1}^{n-1}i\tau_{i}$ for all $n\in {\mathbb N}.$ 
Let the sequence $(f_{n}: {\mathbb R} \rightarrow {\mathbb R})_{n\in {\mathbb N}}$ be defined as follows. Set $f_{1}(x):=1-|x|$ for $|x|\leq 1$ and $f_{1}(x):=0,$ otherwise. If the functions $f_{1}(\cdot),\cdot \cdot \cdot, f_{n-1}(\cdot)$ are already defined, set
$$
f_{n}(x):=f_{n-1}(x)+\sum_{m=1}^{n-1}\frac{n-m}{n}\Bigl[ f_{n-1}\bigl( x-m\tau_{n}\bigr)+f_{n-1}\bigl( x+m\tau_{n}\bigr) \Bigr],\quad x\in {\mathbb R}.
$$
Then 
$$
\bigl| f_{n}(x+\tau_{n})-f_{n}(x)\bigr| \leq \frac{1}{n},\quad n\in {\mathbb N},\ x\in {\mathbb R},
$$
and the function 
\begin{align}\label{ema}
f(x):=\lim_{n\rightarrow +\infty}f_{n}(x),\quad
x\in {\mathbb R}
\end{align} 
is well defined, even and satisfies that $0\leq f(x)\leq 1$ for all $x\in {\mathbb R}$ (see Bohr, the first part of \cite{h.bor}, pp. 113--115). Then we know that
the function $f : {\mathbb R} \rightarrow {\mathbb R},$ given by \eqref{ema}, is bounded, uniformly continuous and uniformly recurrent (\cite{densities}). Therefore, $f(\cdot)$ is $c$-uniformly recurrent
if and only if $c=1.$
\item[(iii)] The following function has been used by de Vries in \cite[point 6., p. 208]{vries}. 
Let $(p_{i})_{i\in {\mathbb N}}$ be a strictly increasing sequence of natural numbers such that $p_{i}|p_{i+1},$ $i\in {\mathbb N}$ and $\lim_{i\rightarrow \infty}p_{i}/p_{i+1}=0.$ Define the function $f_{i} : [-p_{i},p_{i}]\rightarrow [0,1]$ by $f_{i}(t):=|t|/p_{i},$ $t\in [-p_{i},p_{i}]$ and extend the function $f_{i}(\cdot)$ periodically to the whole real axis; the obtained
function, denoted by the same symbol $f_{i}(\cdot),$ is of period $2p_{i}$ ($i\in {\mathbb N}$). Set 
\begin{align}\label{vries-prcko}
f(t):=\sup\bigl\{f_{i}(t): i\in {\mathbb N}\bigr\},\quad t\in {\mathbb R}.
\end{align}
Then the function $f : {\mathbb R} \rightarrow {\mathbb R},$ given by \eqref{vries-prcko}, is bounded, uniformly continuous and uniformly recurrent (\cite{densities}). Therefore, $f(\cdot)$ is $c$-uniformly recurrent
if and only if $c=1.$ 
\end{itemize}
Any of the above three functions is not asymptotically (Stepanov) almost automorphic (see \cite{densities} for more details). 
\end{example}

The function $f(\cdot)$ constructed in the following example is also not asymptotically (Stepanov) almost automorphic since it is not Stepanov bounded:

\begin{example}\label{dugorocne}
The function $g: {\mathbb R}\rightarrow {\mathbb R},$ given by
\begin{align*}
g(t):=\sum_{n=1}^{\infty}\frac{1}{n}\sin^{2}\Bigl(\frac{t}{3^{n}} \Bigr)\, dt,\quad t\in {\mathbb R},
\end{align*}
is unbounded, 
Lipschitz continuous and uniformly recurrent; furthermore, we have the existence of a positive integer $k_{0}\in {\mathbb N}$ such that 
\begin{align}\label{vladadivljan}
\frac{1}{3^{k}\pi}\int^{3^{k}\pi}_{0}g(s)\, ds \geq \frac{1}{2}\bigl(\ln k-1),\quad k\geq k_{0}
\end{align}
and
\begin{align}\label{majafilip}
\sup_{t\in {\mathbb R}}\bigl| g(t+3^{n}\pi)-g(t) \bigr|\leq \frac{\pi}{n+1}\sum_{j=1}^{\infty}3^{-j},\quad n\in {\mathbb N}.
\end{align}
This can be proved in exactly the same way as in the proof of
\cite[Theorem 1.1]{haraux}. Define now
$f(t):=\sin t \cdot g(t),$ $t\in {\mathbb R}.$ Then \eqref{majafilip} easily implies 
$$
\sup_{t\in {\mathbb R}}\bigl| f(t+3^{n}\pi)+f(t) \bigr|\leq \frac{\pi}{n+1}\sum_{j=1}^{\infty}3^{-j},\quad n\in {\mathbb N}.
$$
Therefore, $f(\cdot)$ is uniformly anti-recurrent and Proposition \ref{gruskamiroslavglisiclord} yields that the function $f(\cdot)$ is $c$-uniformly recurrent if and only if $c=\pm 1.$ To prove that $f(\cdot)$ is Stepanov unbounded, obeserve that \eqref{vladadivljan}
implies the existence of a sequence $(t_{k})_{k\in {\mathbb N}}$ of positive real numbers such that $g(t_{k})\geq (1/2)(\ln k-1)$ for all $k\geq k_{0}. $ If we denote by $L\geq 1$ the Lipschitzian constant of mapping $g(\cdot),$ then the above implies
\begin{align}\label{napoljekerber}
g(x)\geq  (1/2)(\ln k-1)-8L\pi,\quad x\in \bigl[ t_{k},t_{k}+8\pi\bigr],\ k\geq k_{0}.
\end{align}
The existence of a constant $M>0$ such that $\int^{t+1}_{t}|\sin s| \cdot g(s)\, ds<M$ for all $t\in {\mathbb R}$  
would imply by
\eqref{napoljekerber} the existence of a sequence $(a_{k})$ of positive integers such that $[2a_{k}\pi+(\pi/2),2a_{k}\pi+(\pi/2)+1]\subseteq [ t_{k},t_{k}+8\pi\bigr]$ and therefore (take $t=2a_{k}\pi+(\pi/2)$) 
$$
\sin((\pi/2)+1) \cdot \Bigl[ (1/2)(\ln k-1)-8L\pi\Bigr]\leq M,\quad k\geq k_{0},
$$
which is a contradiction.
\end{example}

In connection with Proposition \ref{gruskamiroslavglisiclord} and Proposition \ref{balija}, we would like to present an illustrative example with the complex-valued functions:

\begin{example}\label{brukasham}
Let $h: I \rightarrow {\mathbb R},$ $q: I \rightarrow {\mathbb R}$ and $f(t):=h(t)+iq(t),$ $t\in I.$ Suppose that 
$f : I \rightarrow {\mathbb C}$ is $c$-uniformly recurrent, where
$c=e^{i\varphi}$ and $\sin \varphi \neq 0.$ Then $h\in C_{0}(I: {\mathbb R})$ or $q\in C_{0}(I: {\mathbb R})$ implies $f\equiv 0.$ To show this,
 observe that the $c$-uniform recurrence of $f(\cdot)$ implies the existence of a strictly increasing sequence $(\alpha_{n})$ of positive real numbers tending to plus infinity such that
\begin{align*}
\lim_{n\rightarrow +\infty}&\sup_{t\in I}\bigl| h(t+\alpha_{n})-\cos \varphi \cdot h(t) +\sin \varphi \cdot q(t)\bigr|=0,\mbox{ and }
\\& \lim_{n\rightarrow +\infty}\sup_{t\in I}\bigl| q(t+\alpha_{n})-\cos \varphi \cdot q(t) -\sin \varphi\cdot h(t)\bigr|=0.
\end{align*} 
Since we have assumed that $\sin \varphi \neq 0$, the assumption $h\in C_{0}(I: {\mathbb R})$ ($q\in C_{0}(I: {\mathbb R})$) implies by the first (second) of the above equalities that $q\in C_{0}(I: {\mathbb R})$ ($h\in C_{0}(I: {\mathbb R})$). Hence, $f\in C_{0}(I: {\mathbb C})$ and the claimed statement follows by Proposition \ref{balija}.
\end{example}

The space consisting of all almost periodic functions ($c=1$) is the only function space 
from those introduced in Definition \ref{drmniga}, Definition \ref{onakao} and Definition \ref{semi-ceanti} which
has a linear vector structure:

\begin{example}\label{questionbatty}
\begin{itemize}
\item[(i)]
Suppose that $c=1.$ Then the set of 
all $c$-almost periodic functions is a vector space together with the usual operations,
while the set of $c$-uniformly recurrent functions and the set of semi-$c$-periodic functions are not vector spaces together with the usual operations (\cite{densities}).
\item[(ii)]
Suppose that $c=-1.$ Then the set of 
all $c$-almost periodic functions ($c$-uniformly recurrent functions, semi-$c$-periodic functions) is not a vector space together with the usual operations (\cite{krag}).
\item[(iii)] Suppose that $c\neq \pm 1.$ Then the set of 
all $c$-almost periodic functions ($c$-uniformly recurrent functions, semi-$c$-periodic functions) is not a vector space together with the usual operations. Speaking-matter-of-factly, the functions $f_{\varphi ,\pm} : {\mathbb R} \rightarrow {\mathbb R}$ given by $f_{\varphi,\pm}(t):=e^{\pm it\varphi},$ $t\in {\mathbb R}$ ($\varphi \in (-\pi,\pi] \setminus \{0\}$) are $c$-almost periodic (semi-$c$-periodic); see Example \ref{kader}.
Its sum $f_{\varphi ,+}(\cdot)+f_{\varphi ,-}(\cdot)=2\cos \varphi \cdot$ is not $c$-uniformly recurrent due to Proposition \ref{gruskamiroslavglisiclord}.  
\end{itemize}
\end{example}

Similarly, we have:

\begin{example}\label{questionbatty12345}
Let $f : I\rightarrow {\mathbb C}$ and $g: I\rightarrow E.$ 
\begin{itemize}
\item[(i)]
Suppose that $c=1.$ If $f\in AP(I:{\mathbb C})$ and $g\in AP(I: E),$ then $f\cdot g \in AP(I:E);$
furthermore, there exist $f\in UR(I:{\mathbb C})$ and $g\in UR(I: E)$ such that $f\cdot g \notin UR(I:E)$  (\cite{densities}). It can be simply proved that the pointwise product of anti-periodic functions $f(t):=\cos t,$ $t\in {\mathbb R}$ and $g(t):=\cos \sqrt{2}t,$ $t\in {\mathbb R}$ is not a semi-periodic function (see e.g., \cite[Lemma 2]{andres1}).
\item[(ii)]
Suppose that $c=-1.$ Then there exist an anti-periodic function $f(\cdot)$ and an anti-periodic function $g(\cdot)$ such that $f\cdot g(\cdot)$ is not anti-uniformly recurrent.
We can simply take $E={\mathbb C}$ and $f(t):=g(t):=\cos t,$ $t\in I.$
\item[(iii)] Suppose that $c\neq \pm 1.$ Then 
there exist a semi-$c$-periodic function $f(\cdot)$ and a semi-$c$-periodic function $g(\cdot)$ such that $f\cdot g(\cdot)$ is not $c$-uniformly recurrent. 
Consider again the functions $f_{\varphi ,\pm} : {\mathbb R} \rightarrow {\mathbb R}$ given by $f_{\varphi,\pm}(t):=e^{\pm it\varphi},$ $t\in {\mathbb R}$ ($\varphi \in (-\pi,\pi] \setminus \{0\}$). They are semi-$c$-periodic  but their pointwise product 
$f_{\varphi ,+}(\cdot)\cdot f_{\varphi ,-}(\cdot)=1$ is not $c$-uniformly recurrent due to Proposition \ref{gruskamiroslavglisiclord}.  
\end{itemize}
\end{example}

Denote by $ANP_{0}(I : E)$ and $ANP(I : E)$ the linear span of almost anti-periodic functions $I \mapsto E$ and its closure  in $AP(I : E),$
respectively. In \cite[Theorem 2.3]{krag}, we have shown that
$ANP(I : E)=AP_{{\mathbb R} \setminus \{0\}}(I : E).$ Now we will extend this result in the following way:

\begin{thm}\label{deckoracunao}
Denote by $AP_{c,0}(I : E)$ and $\mathbf{AP}_{c,0}(I : E)$  the linear span of $c$-almost periodic functions $I \mapsto E$ and its closure  in $AP(I : E),$
respectively. Then the following holds:
\begin{itemize}
\item[(i)] Let $\arg(c)\in \pi \cdot {\mathbb Q}.$ 
Then we have $\mathbf{AP}_{c,0}(I : E)=AP_{{\mathbb R} \setminus \{0\}}(I : E).$ 
\item[(ii)] Let  $\arg(c)\notin \pi \cdot {\mathbb Q}.$ 
Then we have $\mathbf{AP}_{c,0}(I : E) \supseteq AP_{{\mathbb R} \setminus \{0\}}(I : E).$ 
\end{itemize}
\end{thm}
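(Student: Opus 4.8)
The plan is to reduce the statement to two ingredients: a density statement for pure Fourier modes, which yields $AP_{{\mathbb R}\setminus\{0\}}(I:E)\subseteq\mathbf{AP}_{c,0}(I:E)$ in both parts, and a vanishing-mean statement for $c$-almost periodic functions, which yields the reverse inclusion in part (i). Throughout I take $c\neq 1$; in case (i) this is precisely the requirement $p\neq 0$ in \eqref{rasta123456} (for $c=1$ the left-hand space is all of $AP(I:E)$ and the asserted equality fails).

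First I would record the building block: for every $\mu\in{\mathbb R}\setminus\{0\}$, $x\in E$ and $c\in S_{1}$, the function $e_{\mu,x}(t):=e^{i\mu t}x$ is $c$-almost periodic. Indeed $\|e_{\mu,x}(t+\tau)-ce_{\mu,x}(t)\|=|e^{i\mu\tau}-c|\,\|x\|$ does not depend on $t$, and since $\tau\mapsto e^{i\mu\tau}$ sweeps out all of $S_{1}$ periodically with period $2\pi/|\mu|$, for each $\epsilon>0$ the set $\{\tau>0:|e^{i\mu\tau}-c|\le\epsilon/\|x\|\}$ meets every interval of length $2\pi/|\mu|$ and is therefore relatively dense. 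This extends the scalar computation of Example \ref{kader} from $\varphi\in(-\pi,\pi]$ to arbitrary $\mu\neq 0$, which is also legitimized by Theorem \ref{krewfar}(iv). In particular each such $e_{\mu,x}$ belongs to $AP_{c,0}(I:E)$.

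Next I would prove $AP_{{\mathbb R}\setminus\{0\}}(I:E)\subseteq\mathbf{AP}_{c,0}(I:E)$, which is the whole of (ii) and one half of (i). Given $f\in AP_{{\mathbb R}\setminus\{0\}}(I:E)$ we have $\sigma(f)\subseteq{\mathbb R}\setminus\{0\}$, so by the spectral synthesis \eqref{zaq54321} the function $f$ is a sup-norm limit of finite combinations $\sum_{j}e^{i\mu_{j}\cdot}x_{j}$ with each $\mu_{j}\in\sigma(f)\setminus\{0\}$ and $x_{j}\in R(f)$. By the building block each such combination lies in $AP_{c,0}(I:E)$, and the limit $f$ is almost periodic, so $f$ belongs to the closure of $AP_{c,0}(I:E)$ taken in $AP(I:E)$; that is, $f\in\mathbf{AP}_{c,0}(I:E)$. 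This already settles (ii).

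Finally, for the reverse inclusion in (i), I would show that every $c$-almost periodic $g$ has vanishing mean. Since $\arg(c)\in\pi{\mathbb Q}$ and $c\neq 1$, Corollary \ref{uspeva} gives that $g$ is almost periodic, so $P_{0}(g)=\lim_{t\to\infty}t^{-1}\int_{0}^{t}g(s)\,ds$ exists and is translation invariant. Fixing $\epsilon>0$ and choosing an $(\epsilon,c)$-period $\tau$, I would average $\|g(t+\tau)-cg(t)\|\le\epsilon$ over $t\in[0,T]$ and let $T\to\infty$; using $P_{0}(g(\cdot+\tau))=P_{0}(g)$ and $P_{0}(cg)=cP_{0}(g)$ this gives $|1-c|\,\|P_{0}(g)\|\le\epsilon$, whence $P_{0}(g)=0$ because $c\neq 1$ and $\epsilon$ is arbitrary. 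Thus $0\notin\sigma(g)$ and $g\in AP_{{\mathbb R}\setminus\{0\}}(I:E)$, so $AP_{c,0}(I:E)\subseteq AP_{{\mathbb R}\setminus\{0\}}(I:E)$; as the latter is norm-closed in $AP(I:E)$, the closed span satisfies $\mathbf{AP}_{c,0}(I:E)\subseteq AP_{{\mathbb R}\setminus\{0\}}(I:E)$, and combining with the previous step gives the equality in (i). I expect the only genuinely delicate point to be this last inclusion: it relies both on $c$-almost periodicity forcing ordinary almost periodicity (so that the mean exists and is translation invariant, via Corollary \ref{uspeva}, respectively Proposition \ref{matsah}) and on the closedness of $AP_{{\mathbb R}\setminus\{0\}}(I:E)$, which is what allows the inclusion to pass to the closed linear span. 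I note that the same mean computation goes through verbatim when $\arg(c)\notin\pi{\mathbb Q}$ once $g$ is known to be almost periodic by Proposition \ref{matsah}, so equality in fact holds in (ii) too, even though the statement records only the inclusion.
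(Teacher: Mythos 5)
Your proposal is correct, and its first half (the inclusion $AP_{{\mathbb R}\setminus\{0\}}(I:E)\subseteq\mathbf{AP}_{c,0}(I:E)$ via the spectral synthesis \eqref{zaq54321} and the $c$-almost periodicity of $t\mapsto e^{i\mu t}x$ for $\mu\neq 0$) coincides with the paper's argument. The second half, however, takes a genuinely different and noticeably shorter route. The paper first splits according to the parity of $p$ in \eqref{rasta123456}: for $p$ odd it reduces to the almost anti-periodic case and invokes \cite[Theorem 2.3]{krag}, and for $p$ even it proves $\lim_{t\to\infty}t^{-1}\int_0^t f(s)\,ds=0$ by a telescoping estimate built on the identity $1+c+\cdots+c^{q-1}=0$, summing the translates $f(s+(n-1)\tau)+\cdots+f(s)$ and integrating over $[0,n\tau]$. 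You instead observe that once $g$ is known to be almost periodic (Corollary \ref{uspeva}, resp.\ Proposition \ref{matsah}), averaging the pointwise bound $\|g(t+\tau)-cg(t)\|\leq\epsilon$ and using translation invariance of the Bohr mean gives $|1-c|\,\|P_{0}(g)\|\leq\epsilon$ directly, so $P_{0}(g)=0$ whenever $c\neq 1$. This avoids the case split, the external citation, and the combinatorial estimate, and it buys two extra dividends that you correctly point out: it exposes that the statement of (i) must implicitly exclude $c=1$ (consistent with \eqref{rasta123456}, which forces $p\neq 0$), and it shows that the inclusion in (ii) is in fact an equality, since the same mean computation applies verbatim once Proposition \ref{matsah} delivers ordinary almost periodicity. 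The passage from single $c$-almost periodic functions to the closed linear span via the closedness of $AP_{{\mathbb R}\setminus\{0\}}(I:E)$ is handled the same way in both arguments.
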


\begin{proof}
Assume first that
$f\in AP_{{\mathbb R} \setminus \{0\}}(I : E).$
By \eqref{zaq54321}, we have 
$$
f\in \overline{span\{e^{i\mu \cdot} x : \mu \in \sigma(f),\ x\in R(f)\}},
$$
where the closure is taken in the space $C_{b}(I : E).$
Since $\sigma(f) \subseteq {\mathbb R} \setminus \{0\}$ and the function $t\mapsto e^{i\mu t},$ $t\in I$ ($\mu \in {\mathbb R} \setminus \{0\}$) is $c$-almost periodic for all $c\in S_{1},$ we have that $span\{e^{i\mu \cdot} x : \mu \in \sigma(f),\ x\in R(f)\}\subseteq AP_{c,0}(I : E).$ Hence, $f\in \mathbf{AP}_{c,0}(I : E).$ To complete the proof, it remains to
consider case $\arg(c)\in \pi \cdot {\mathbb Q}$ and show that any  
function $f\in \mathbf{AP}_{c,0}(I : E)$ belongs to the space $AP_{{\mathbb R} \setminus \{0\}}(I : E).$
Furthermore, it suffices to consider case in which
\eqref{rasta123456} holds with the number $p$ even because otherwise we can apply Corollary \ref{uspeva}(ii) and Proposition \ref{matsah}(i) to see that 
$AP_{c,0}(I : E)\subseteq ANP_{0}(I : E)$ and therefore $\mathbf{AP}_{c,0}(I : E) \subseteq ANP(I : E),$ so that the statement directly follows from \cite[Theorem 2.3]{krag}.
We will prove that
\begin{align}\label{spektar}
\lim_{t\rightarrow \infty}\frac{1}{t}\int^{t}_{0}f(s)\, ds =0;
\end{align}
clearly, by almost periodicity of $f(\cdot),$ the limit in \eqref{spektar} exists. Let $\epsilon>0$ be fixed, and let $l>0$ satisfy that every interval of $[0,\infty)$ of length $l$ contains a point $\tau $ such that $\|f(t+\tau)-cf(t)\|\leq \epsilon,$ $t\geq 0.$
We have $c^{q}=1$ and therefore $1+c+\cdot \cdot \cdot +c^{q-1}=0;$ using this equality and decomposition ($s\geq 0,\ n\in {\mathbb N}$)
\begin{align*}
\bigl\|f(s&+(n
-1)\tau)+f(s+(n-2)\tau)+\cdot \cdot \cdot +f(s)\bigr\|
\\&\leq \epsilon +\bigl\|(1+c)f(s+(n-2)\tau)+f(s+(n-3)\tau)+\cdot \cdot \cdot +f(s)\bigr\|
\\& \leq \epsilon +\bigl\|(1+c)f(s+(n-2)\tau)-(1+c)cf(s+(n-3)\tau)\bigr\|
\\&+\bigl\|[1+(1+c)c]f(s+(n-3)\tau)+f(s+(n-4)\tau)\cdot \cdot \cdot+ f(s)\bigr\|
\\& \leq \epsilon +|1+c|\epsilon +\Bigl\|[1+c+c^{2}]f(s+(n-3)\tau)+f(s+(n-4)\tau)+\cdot \cdot \cdot +f(s)\Bigr\|
\\& \leq  \epsilon +|1+c|\epsilon +\bigl|1+c+c^{2}\bigr|\epsilon +...
\\& \leq  \epsilon +|1+c|\epsilon +\bigl|1+c+c^{2}\bigr|\epsilon +...+\bigl| 1+c+c^{2}+\cdot \cdot \cdot+c^{q-2}\bigr|\epsilon
\\& +\bigl\| f(s)+f(s+\tau)+\cdot \cdot \cdot +f(s+(n-1-q)\tau) \bigr\|,
\end{align*}
we immediately get that there exists a finite constant $A\geq 1$ such that, for every $s\geq 0$ and $ n\in {\mathbb N},$
\begin{align*}
\bigl\|f(s&+(n
-1)\tau)+f(s+(n-2)\tau)+\cdot \cdot \cdot +f(s)\bigr\|\leq A\epsilon\lceil n/q\rceil +A\|f\|_{\infty}.
\end{align*}
Integrating this estimate over the segment $[0,n\tau],$ we get that, for every $s\geq 0$ and $ n\in {\mathbb N},$
\begin{align*}
\Biggl\|\int^{n\tau}_{0}f(s)\, ds\Biggr\| &=\Biggl\|\int^{\tau}_{0}\bigl[f(s+(n
-1)\tau)+f(s+(n-2)\tau)+\cdot \cdot \cdot +f(s) \bigr]\, ds\Biggr\|
\\& \leq A\tau\epsilon\lceil n/q\rceil +A\tau\|f\|_{\infty}.
\end{align*}
Dividing the both sides of the above inequality with $n\tau$, we get that 
$$
\lim_{n\rightarrow +\infty}\Biggl\|\frac{1}{n\tau}\int^{n\tau}_{0}f(s)\, ds\Biggr\| \leq A\epsilon /q.
$$
Since $\epsilon>0$ was arbitrary, this immediately yields \eqref{spektar}.
\end{proof}

Now we will state and prove the following result: 

\begin{prop}\label{batty}
Suppose that $f : [0,\infty) \rightarrow E$ is $c$-almost periodic (semi-$c$-periodic). Then ${\mathbf E}f: {\mathbb R} \rightarrow E$ is a unique $c$-almost periodic extension (semi-$c$-periodic extension)
of $f(\cdot)$ to the whole real axis.
\end{prop}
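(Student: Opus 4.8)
The plan is to exploit the surjective isometry $\mathbf{E} : AP([0,\infty):E) \rightarrow AP(\mathbb{R}:E)$ recalled from Bart and Goldberg, together with the translation-group structure behind it, to produce the extension and verify that it inherits the relevant $(\epsilon,c)$-period structure. First I would observe that both a $c$-almost periodic function and a semi-$c$-periodic function on $[0,\infty)$ are bounded (Proposition \ref{ogranicenost} and the remark after Definition \ref{semi-ceanti}); moreover, by Proposition \ref{matsah}(i) and the hierarchy established in Corollary \ref{uspeva} and Proposition \ref{matsahari}, such $f$ is in particular almost periodic, so $f \in AP([0,\infty):E)$ and $\mathbf{E}f$ is well defined as the unique almost periodic extension to $\mathbb{R}$. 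The task then reduces to showing that this already-existing extension happens to be $c$-almost periodic (resp. semi-$c$-periodic), and that it is the \emph{only} such extension.

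Next I would transfer the defining inequalities through $\mathbf{E}$. The key is that $\mathbf{E}$ intertwines the translation group: $[\mathbf{E}f](t) = [W(t)f](0)$, and since $(W(t))_{t\in\mathbb{R}}$ is a group of isometries extending the translation semigroup, we have $[\mathbf{E}f](t+\tau) = [W(\tau)(\mathbf{E}f)](t)$ for all real $t,\tau$, with $W(\tau)$ acting on the $[0,\infty)$-realization as translation by $\tau$. Concretely, for fixed $\tau>0$,
\begin{align*}
\sup_{t\in\mathbb{R}}\bigl\|[\mathbf{E}f](t+\tau)-c[\mathbf{E}f](t)\bigr\| = \bigl\|W(\tau)(\mathbf{E}f)-c\,\mathbf{E}f\bigr\|_{\infty} = \bigl\|f(\cdot+\tau)-cf(\cdot)\bigr\|_{\infty},
\end{align*}
where the last equality uses that $\mathbf{E}$ is an isometry and that $W(\tau)$ maps $\mathbf{E}f$ back to the translate of $f$ under the identification. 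Thus every $(\epsilon,c)$-period $\tau$ of $f$ on $[0,\infty)$ is an $(\epsilon,c)$-period of $\mathbf{E}f$ on $\mathbb{R}$, and the relative density of $\vartheta_c(f,\epsilon)$ in $[0,\infty)$ carries over verbatim; this gives $\mathbf{E}f \in AP_c(\mathbb{R}:E)$. The semi-$c$-periodic case is entirely analogous, running the same isometric identity with the period $p$ and the iterates $m\in\mathbb{N}$ supplied by Definition \ref{semi-ceanti}, and invoking Proposition \ref{netodin} so that it suffices to control $m\in\mathbb{N}$.

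For uniqueness I would argue that any two $c$-almost periodic (hence almost periodic) extensions of $f$ agree on $[0,\infty)$, so their difference is an almost periodic function on $\mathbb{R}$ vanishing on $[0,\infty)$; by the uniqueness clause already recorded for $\mathbf{E}$ (an almost periodic extension of an element of $AP([0,\infty):E)$ is unique), the two extensions coincide, and likewise in the semi-$c$-periodic case since such functions are a fortiori almost periodic. The main obstacle I anticipate is the careful bookkeeping in the transfer identity: one must verify that $W(\tau)(\mathbf{E}f)$ really equals $\mathbf{E}(f(\cdot+\tau))$ under the group extension, i.e.\ that the group action on $AP(\mathbb{R}:E)$ restricts correctly to ordinary translation, rather than taking the intertwining for granted. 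Once this commutation between $\mathbf{E}$ and translation is pinned down, the isometry property collapses the two supremum norms and the rest is routine.
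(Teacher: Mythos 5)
Your proposal is correct and follows essentially the same route as the paper: reduce to the fact that $f$ is almost periodic so that ${\mathbf E}f$ exists as the unique almost periodic extension, then transfer the defining inequalities $\|f(\cdot+\tau)-cf(\cdot)\|$ (resp. $\|f(\cdot+mp)-c^{m}f(\cdot)\|$) from $[0,\infty)$ to ${\mathbb R}$. Where the paper invokes the supremum formula for the almost periodic function ${\mathbf E}f(\cdot+mp)-c^{m}{\mathbf E}f(\cdot)$, you use the linear isometry property of ${\mathbf E}$ together with its commutation with translations, which is the same fact in different clothing.
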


\begin{proof}
The proof for the class of $c$-almost periodic functions is very similar to the proof of \cite[Proposition 2.2]{krag} and therefore omitted. For the class of semi-$c$-periodic functions, the proof can be deduced as follows (see also \cite[Proposition 4]{chelj}). 
Due to Proposition \ref{matsahari}, we have that the function $f: [0,\infty) \rightarrow E$ is almost periodic, so that the function ${\mathbf E}f: {\mathbb R} \rightarrow E$ is a unique almost periodic extension of $f(\cdot)$ to the whole real axis.
Therefore, it remains to be proved that ${\mathbf E}f(\cdot)$ is semi-$c$-periodic. Let $\epsilon>0$ be fixed. Then there exists $p>0$ such that for all $m\in {\mathbb N}$ and $x\geq 0$ we have $\|f(x+mp)-c^{m}f(x)\|\leq \epsilon.$
For every fixed number $m\in {\mathbb N},$ the function ${\mathbf E}f(\cdot+mp)-c^{m}{\mathbf E}f(\cdot)$ is almost periodic so that the supremum formula implies
\begin{align*}
\sup_{x\in {\mathbb R}}\bigl\| {\mathbf E}f(x+mp)-c^{m}{\mathbf E}f(x) \bigr\| &=\sup_{x\geq 0}\bigl\| {\mathbf E}f(x+mp)-c^{m}{\mathbf E}f(x) \bigr\|  
\\ &= \sup_{x\geq 0}\bigl\| f(x+mp)-c^{m}f(x) \bigr\| 
\leq \epsilon.
\end{align*}
This completes the proof.
\end{proof}

We continue by introducing the following notion:

\begin{defn}\label{asymp-cea}\index{function!asymptotically $c$-uniformly recurrent}\index{function!asymptotically $c$-almost periodic}\index{function!asymptotically semi-$c$-periodic}
A continuous function $f: I \rightarrow E$ is called asymptotically $c$-uniformly recurrent (asymptotically $c$-almost periodic, asymptotically semi-$c$-periodic) if and only if there are a $c$-uniformly recurrent ($c$-almost periodic, semi-$c$-periodic) function $g: I \rightarrow E$ and a function $h\in C_{0}(I : E)$ such that 
$
f(x)=g(x)+h(x),$ $x\in I.$
\end{defn}


For the Stepanov  classes, we will use the following notion:

\begin{defn}\label{stepa-anti-c}\index{function!}\index{function!Stepanov $(p,c)$-uniformly recurrent}\index{function!Stepanov $(p,c)$-almost periodic}\label{stepa-anti-c}\index{function!asymptotically Stepanov $(p,c)$-uniformly recurrent}\index{function!asymptotically Stepanov $(p,c)$-almost periodic}
Let $1\leq p<\infty,$ and let $f\in L_{loc}^{p}(I : E).$
\begin{itemize}
\item[(i)] It is said that $f(\cdot)$ is Stepanov $(p,c)$-uniformly recurrent (Stepanov $(p,c)$-almost periodic, Stepanov semi-$(p,c)$-periodic) if and only if the function $\hat{f}:  I \rightarrow L^{p}([0,1]:E),$ defined by 
\eqref{zad},
is $c$-uniformly recurrent ($c$-almost periodic, semi-$c$-periodic).
\item[(ii)] It is said that $f(\cdot)$ is asymptotically Stepanov $(p,c)$-uniformly recurrent (asymptotically Stepanov $(p,c)$-almost periodic, asymptotically Stepanov semi-$(p,c)$-periodic) if and only if there are a Stepanov $(p,c)$-uniformly 
recurrent (Stepanov $(p,c)$-almost periodic, Stepanov semi-$(p,c)$-periodic) function $h(\cdot)$ and $q\in C_{0}(I : L^{p}([0,1] : E))$ such that
$f(t)=h(t)+q(t)$ for a.e. $t\in I.$
\end{itemize}
\end{defn}  

\subsection{Composition principles for $c$-almost periodic type functions}\label{poredfgha}

In this subsection, we will clarify and prove several composition principles for $c$-almost periodic functions and $c$-uniformly recurrent functions; the composition 
theorems for semi-$c$-periodic functions will be investigated in \cite{novipinto}. 

Suppose that $F : I\times X\rightarrow E$ is a continuous function and there exists a finite constant $L>0$ such that
\begin{align}\label{lipshic}
\| F(t,x)-F(t,y)\|\leq L\|x-y\|_{X},\quad t\in I,\ x,\ y\in X.
\end{align}
Define ${\mathcal F}(t):=F(t,f(t)),$ $t\in I.$ 
We need the following estimates ($\tau \geq 0,$ $c\in {\mathbb C}\setminus\{0\},$ $t\in I$):
\begin{align}
\notag &\Bigl\| F(t+\tau,f(t+\tau))-cF(t,f(t))\Bigr\|
\\ \notag &\leq  \Bigl\| F(t+\tau,f(t+\tau))-  F\Bigl(t+\tau,cf(t)\Bigr)\Bigl\| + \Bigl\| F\Bigl(t+\tau,cf(t)\Bigr)-cF(t,f(t))\Bigl\| 
\\\label{krujicalno}& \leq L\Bigl\|f(t+\tau)- cf(t)\Bigr\|+\Bigl\| F\Bigl(t+\tau,cf(t)\Bigr)-cF(t,f(t))\Bigl\| .
\end{align}

Using \eqref{krujicalno}, we can simply deduce the following result:

\begin{thm}\label{prcko-radojica-1234567890}
Suppose that $F : I\times X\rightarrow E$ is a continuous function and there exists a finite constant $L>0$ such that
\eqref{lipshic} holds. 
\begin{itemize}
\item[(i)] 
Suppose that $f : I \rightarrow X$ is $c$-uniformly recurrent. 
If there exists a strictly increasing sequence $(\alpha_{n})$ of positive reals tending to plus infinity such that 
\begin{align}\label{jaroslav12345}
\lim_{n\rightarrow+\infty}\sup_{t\in I}\bigl\|f(t+\alpha_{n})-c f(t)\bigr\|=0
\end{align}
and
\begin{align}\label{dragoslav12345}
\lim_{n\rightarrow +\infty}\sup_{t\in I}\Bigl\| F\Bigl(t+\alpha_{n},cf(t)\Bigr)-cF(t,f(t))\Bigl\|=0, 
\end{align}
then the mapping ${\mathcal F}(t):=F(t,f(t)),$ $t\in I$ is $c$-uniformly recurrent.
\item[(ii)] Suppose that $f : I \rightarrow X$ is $c$-almost periodic.
If for each $\epsilon>0$ the set of all positive real numbers $\tau>0$ such that
\begin{align}\label{jaroslav123456}
\sup_{t\in I}\bigl\|f(t+\tau)-cf(t)\bigr\|<\epsilon
\end{align}
and
\begin{align}\label{jaroslav12345666678}
\sup_{t\in I}\Bigl\| F\Bigl(t+\tau,cf(t)\Bigr)-cF(t,f(t))\Bigl\|<\epsilon, 
\end{align}
is relatively dense in $[0,\infty),$
then the mapping ${\mathcal F}(t):=F(t,f(t)),$ $t\in I$ is $c$-almost periodic.
\end{itemize}
\end{thm}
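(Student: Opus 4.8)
The plan is to prove both parts directly from the fundamental estimate \eqref{krujicalno}, which bounds $\|\mathcal{F}(t+\tau)-c\mathcal{F}(t)\|$ by the sum of $L\|f(t+\tau)-cf(t)\|$ and the ``twisted'' composition term $\|F(t+\tau,cf(t))-cF(t,f(t))\|$. For part (i), I would fix the strictly increasing sequence $(\alpha_n)$ supplied by the hypotheses and simply take the supremum over $t\in I$ on both sides of \eqref{krujicalno} with $\tau=\alpha_n$. Since the right-hand side splits into $L\sup_t\|f(t+\alpha_n)-cf(t)\|$ plus $\sup_t\|F(t+\alpha_n,cf(t))-cF(t,f(t))\|$, the hypotheses \eqref{jaroslav12345} and \eqref{dragoslav12345} force both terms to zero as $n\to\infty$. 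Thus
\begin{align*}
\lim_{n\to\infty}\sup_{t\in I}\bigl\|\mathcal{F}(t+\alpha_n)-c\mathcal{F}(t)\bigr\|=0,
\end{align*}
and since $(\alpha_n)$ is strictly increasing with $\alpha_n\to+\infty$ and $\mathcal{F}$ is continuous (being the composition of the continuous $F$ with the continuous $f$), this is exactly Definition \ref{onakao} for $c$-uniform recurrence of $\mathcal{F}$.

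For part (ii), the argument is of the same flavor but phrased in terms of relatively dense sets of $(\epsilon,c)$-periods rather than a single sequence. Given $\epsilon>0$, I would apply \eqref{krujicalno} to any $\tau$ lying in the set described in the statement, namely those $\tau>0$ satisfying both \eqref{jaroslav123456} and \eqref{jaroslav12345666678} with the threshold $\epsilon$ replaced by $\epsilon/(L+1)$ (or any convenient rescaling). For such $\tau$ the estimate yields $\sup_{t\in I}\|\mathcal{F}(t+\tau)-c\mathcal{F}(t)\|\leq L\cdot\frac{\epsilon}{L+1}+\frac{\epsilon}{L+1}<\epsilon$, so every such $\tau$ is an $(\epsilon,c)$-period for $\mathcal{F}$. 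By hypothesis this set of $\tau$ is relatively dense in $[0,\infty)$, hence $\vartheta_c(\mathcal{F},\epsilon)$ contains a relatively dense set and is therefore itself relatively dense. Since $\epsilon>0$ was arbitrary, Definition \ref{drmniga} gives the $c$-almost periodicity of $\mathcal{F}$.

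The conceptual work in this theorem has already been front-loaded into the estimate \eqref{krujicalno}, which is where the Lipschitz condition \eqref{lipshic} and the insertion-of-$cf(t)$ trick do their job; consequently neither part presents a genuine obstacle, and the proof amounts to taking suprema and invoking the two defining conditions. The only point requiring a small amount of care is the harmless rescaling of $\epsilon$ in part (ii) so that the two contributions sum to strictly less than $\epsilon$, together with the routine observation that continuity of $\mathcal{F}$ is inherited from that of $F$ and $f$. I would therefore present the proof compactly, emphasizing that both statements are immediate consequences of \eqref{krujicalno} once the relevant hypotheses are inserted.
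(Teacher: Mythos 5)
Your proposal is correct and follows exactly the route the paper intends: the paper offers no further proof beyond the remark that the theorem is ``simply deduced'' from the estimate \eqref{krujicalno}, and your argument --- taking suprema in that estimate along the sequence $(\alpha_{n})$ for part (i), and rescaling $\epsilon$ to $\epsilon/(L+1)$ over the relatively dense set of common $\tau$'s for part (ii) --- is precisely that deduction carried out in full.
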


For the class of asymptotically $c$-almost periodic functions, the following result simply follows from the previous theorem and the argumentation used in the proof of \cite[Theorem 3.49]{diagana}:

\begin{thm}\label{prcko-radojica-12345678901}
Suppose that $F : I\times X\rightarrow E$ and  
$Q : I\times X\rightarrow E$
are continuous functions and there exists a finite constant $L>0$ such that
\eqref{lipshic} holds as well as that \eqref{lipshic} holds with the function $F(\cdot,\cdot)$ replaced therein with the function
$Q(\cdot,\cdot).$
\begin{itemize}
\item[(i)] 
Suppose that $g: I \rightarrow E$ is a $c$-uniformly recurrent function, $h\in C_{0}(I : E)$ and
$
f(x)=g(x)+h(x),$ $x\in I.$
If there exists a strictly increasing sequence $(\alpha_{n})$ of positive reals tending to plus infinity such that \eqref{jaroslav12345}
and
\eqref{dragoslav12345}
hold with the function $f(\cdot)$ replaced therein with the function $g(\cdot)$,
$\lim_{|t|\rightarrow +\infty}Q(t, y) = 0$ uniformly for $y\in R(f),$
then the mapping ${\mathcal H}(t):=(F+Q)(t,f(t)),$ $t\in I$ is asymptotically $c$-uniformly recurrent.
\item[(ii)] Suppose that $g: I \rightarrow E$ is a $c$-almost periodic function, $h\in C_{0}(I : E)$ and
$
f(x)=g(x)+h(x),$ $x\in I.$
If for each $\epsilon>0$ the set of all positive real numbers $\tau>0$ such that \eqref{jaroslav123456}
and
\eqref{jaroslav12345666678}
hold with the function $f(\cdot)$ replaced therein with the function $g(\cdot)$,
$\lim_{|t|\rightarrow +\infty}Q(t, y) = 0$ uniformly for $y\in R(f),$
then the mapping ${\mathcal H}(t):=(F+Q)(t,f(t)),$ $t\in I$ is asymptotically $c$-almost periodic.
\end{itemize}
\end{thm}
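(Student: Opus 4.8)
The plan is to reduce both parts to the homogeneous composition result in Theorem \ref{prcko-radojica-1234567890} by isolating an asymptotically negligible remainder. I would start from the continuity of $\mathcal{H}(\cdot)$ (clear, as a composition of the continuous maps $F+Q$ and $f$) and from the algebraic decomposition
$$
\mathcal{H}(t)=F(t,f(t))+Q(t,f(t))=F(t,g(t))+\bigl[F(t,f(t))-F(t,g(t))\bigr]+Q(t,f(t)),\quad t\in I .
$$
The strategy is then to show that the first summand carries the $c$-uniform recurrence (resp. $c$-almost periodicity), while the two bracketed terms together form a function in $C_{0}(I:E)$; the conclusion is then immediate from Definition \ref{asymp-cea}.

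For the leading summand I would invoke Theorem \ref{prcko-radojica-1234567890} applied to the function $g(\cdot)$ in place of $f(\cdot)$. In part (i) the hypotheses \eqref{jaroslav12345}--\eqref{dragoslav12345} are assumed to hold with $f(\cdot)$ replaced by $g(\cdot)$ along the sequence $(\alpha_{n})$, and $g(\cdot)$ is $c$-uniformly recurrent, which is exactly the input of Theorem \ref{prcko-radojica-1234567890}(i); hence $t\mapsto F(t,g(t))$ is $c$-uniformly recurrent. In part (ii) the assumptions \eqref{jaroslav123456}--\eqref{jaroslav12345666678} (with $g$ in place of $f$) furnish, for each $\epsilon>0$, a relatively dense set of periods, so Theorem \ref{prcko-radojica-1234567890}(ii) gives that $t\mapsto F(t,g(t))$ is $c$-almost periodic.

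It then remains to place the remainders in $C_{0}(I:E)$. For the first one I would use the Lipschitz estimate \eqref{lipshic} to bound $\|F(t,f(t))-F(t,g(t))\|\le L\|f(t)-g(t)\|_{X}=L\|h(t)\|_{X}$, which tends to $0$ as $|t|\to+\infty$ because $h\in C_{0}$; continuity is again clear. For the term $t\mapsto Q(t,f(t))$, the assumed uniform decay $\lim_{|t|\to+\infty}Q(t,y)=0$ for $y\in R(f)$, together with $f(t)\in R(f)$ for every $t\in I$, forces $\|Q(t,f(t))\|\to0$. A finite sum of $C_{0}$ functions being $C_{0}$, the total remainder lies in $C_{0}(I:E)$, so $\mathcal{H}(\cdot)$ is the sum of a $c$-uniformly recurrent (resp. $c$-almost periodic) function and a $C_{0}$ function, which is precisely asymptotical $c$-uniform recurrence (resp. asymptotical $c$-almost periodicity).

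I expect no genuine obstacle here: the whole point of the argument is that all the ``error'' introduced by passing from $g$ to $f=g+h$ and by adding the perturbation $Q$ is absorbed into the $C_{0}$ part, so the delicate objects (the sequence $(\alpha_{n})$ in part (i) and the relatively dense sets of $\tau$'s in part (ii)) never have to be re-derived; they are supplied verbatim by Theorem \ref{prcko-radojica-1234567890} applied to $g$. The only points demanding mild care are the bookkeeping of the two remainder terms and the observation that the Lipschitz constant used to control $F(\cdot,f(\cdot))-F(\cdot,g(\cdot))$ is exactly the one in \eqref{lipshic}, while the analogous hypothesis on $Q$ is used only through its uniform decay rather than its Lipschitz property.
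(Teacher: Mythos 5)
Your argument is correct and is exactly the one the paper has in mind: the paper gives no written proof, merely stating that the result ``follows from the previous theorem and the argumentation used in the proof of \cite[Theorem 3.49]{diagana},'' and that argumentation is precisely your decomposition ${\mathcal H}(t)=F(t,g(t))+[F(t,f(t))-F(t,g(t))]+Q(t,f(t))$, with Theorem \ref{prcko-radojica-1234567890} applied to $g$ for the principal term and the Lipschitz bound $L\|h(t)\|$ plus the uniform decay of $Q$ on $R(f)$ absorbing the remainder into $C_{0}(I:E)$. Nothing is missing; you have simply written out the details the authors left to the cited reference.
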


For the Stepanov classes, the following result slightly generalizes the well known result of Long and Ding \cite[Theorem 2.2]{comp-adv}. The proof can be deduced by using the argumentation used in the proofs of the above-mentioned theorem and  \cite[Theorem 3.18]{abdoje}:

\begin{thm}\label{djurologijamatre}
Let 
$p,\ q\in [1,\infty) ,$ $r\in [1,\infty],$ $1/p=1/q+1/r$ and the following conditions hold:
\begin{itemize}
\item[(i)] Let $F : I \times X \rightarrow E$ 
and let there exist a function $ L_{F}\in L_{S}^{r}(I) $ such that
\begin{align}\label{vbnmp-manijak}
\| F(t,x)-F(t,y)\| \leq L_{F}(t)\|x-y\|_{X},\quad t\in I,\ x,\ y\in X.
\end{align}
\item[(ii)] There exists a strictly increasing sequence $(\alpha_{n})$ of positive real numbers tending to plus infinity such that 
\begin{align}\label{djurociljvjet}
\lim_{n\rightarrow +\infty}\sup_{t\in I}\sup_{u\in R(f)} \int^{t+1}_{t} \Bigl\|F\bigl(s+\alpha_{n},cu\bigr)-cF(s,u)\Bigr\|^{p}\, ds=0
\end{align}
and
\begin{align}\label{djurociljvjet12345}
\lim_{n\rightarrow+\infty}\sup_{t\in I}\int^{t+1}_{t}\Bigl\| f(s+\alpha_{n})-cf(s)\Bigr\|^{q}\, ds=0.
\end{align}
\end{itemize}
Then the function $F(\cdot, f(\cdot))$ is Stepanov $(p,c)$-uniformly recurrent. Furthermore, the assumption that $F(\cdot,0)$ is Stepanov $p$-bounded implies that the function 
$F(\cdot, f(\cdot))$ is Stepanov $p$-bounded, as well.
\end{thm}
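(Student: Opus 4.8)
The plan is to unwind the definition of Stepanov $(p,c)$-uniform recurrence and reduce the whole statement to a single integral estimate over unit intervals. By Definition \ref{stepa-anti-c}(i) it suffices to show that the function $\widehat{{\mathcal F}} : I \to L^p([0,1]:E)$ is $c$-uniformly recurrent, i.e. that with the very sequence $(\alpha_n)$ furnished by hypothesis (ii) one has
\begin{align*}
\lim_{n\to\infty}\sup_{t\in I}\Biggl(\int_t^{t+1}\bigl\|{\mathcal F}(u+\alpha_n)-c{\mathcal F}(u)\bigr\|^p\,du\Biggr)^{1/p}=0 .
\end{align*}
First I would insert the decomposition \eqref{krujicalno} with $\tau=\alpha_n$ into the integrand and apply Minkowski's inequality on $L^p([t,t+1]:E)$ to split the left-hand side into a \emph{Lipschitz part} and a \emph{forcing part}.

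The forcing part is $(\int_t^{t+1}\|F(u+\alpha_n,cf(u))-cF(u,f(u))\|^p\,du)^{1/p}$; since $f(u)\in R(f)$ for every $u$, it is bounded above, uniformly in $t$, by the quantity appearing in \eqref{djurociljvjet}, which tends to $0$ as $n\to\infty$. For the Lipschitz part I would use \eqref{vbnmp-manijak} to dominate the integrand by $L_F(u+\alpha_n)^p\,\|f(u+\alpha_n)-cf(u)\|_X^p$ and then apply H\"older's inequality with the conjugate exponents $r/p$ and $q/p$; their conjugacy is precisely the relation $p/r+p/q=1$, equivalently $1/p=1/q+1/r$ (which also forces $p\le q$ and $p\le r$, so both exponents are genuine). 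This factors the integral as $(\int_t^{t+1}L_F(u+\alpha_n)^r\,du)^{p/r}$ times $(\int_t^{t+1}\|f(u+\alpha_n)-cf(u)\|_X^q\,du)^{p/q}$. After the substitution $w=u+\alpha_n$ the first factor is $\le\|L_F\|_{S^r}^p$ by shift-invariance of the Stepanov norm, while the second tends to $0$ uniformly in $t$ by \eqref{djurociljvjet12345}. Taking $p$-th roots and letting $n\to\infty$ closes the uniform-recurrence claim; the endpoint $r=\infty$ (hence $p=q$) is treated by replacing the H\"older step with the pointwise bound $L_F(u+\alpha_n)\le\|L_F\|_{S^\infty}$.

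For the final assertion on Stepanov $p$-boundedness I would write $F(u,f(u))=[F(u,f(u))-F(u,0)]+F(u,0)$, bound the first bracket by $L_F(u)\|f(u)\|_X$ via \eqref{vbnmp-manijak}, and rerun the same H\"older computation to obtain $\|F(\cdot,f(\cdot))\|_{S^p}\le \|L_F\|_{S^r}\,\|f\|_{S^q}+\|F(\cdot,0)\|_{S^p}$, all three factors being finite once $f$ is Stepanov $q$-bounded.

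The step I expect to be the main obstacle is the H\"older bookkeeping: one must verify that $r/p$ and $q/p$ are honest conjugate exponents, carry the $p$-th powers through the two factors correctly, and dispose of the endpoint $r=\infty$ separately. Once the exponents are pinned down, the change of variables $w=u+\alpha_n$ together with the shift-invariance of $\|\cdot\|_{S^r}$ renders the remaining estimates routine.
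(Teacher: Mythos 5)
The paper does not actually write out a proof of Theorem \ref{djurologijamatre}; it only points to the argumentation of Long and Ding \cite[Theorem 2.2]{comp-adv} and of \cite[Theorem 3.18]{abdoje}. Your reduction to the unit-interval estimate, the splitting via \eqref{krujicalno} and Minkowski, and the H\"older step with the conjugate pair $r/p$, $q/p$ (together with the shift invariance giving the factor $\|L_F\|_{S^r}^p$ and the treatment of $r=\infty$) are exactly the intended mechanism, and that part of your write-up, including the final $S^p$-boundedness estimate $\|F(\cdot,f(\cdot))\|_{S^p}\le\|L_F\|_{S^r}\|f\|_{S^q}+\|F(\cdot,0)\|_{S^p}$, is correct.

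There is, however, one step that fails as you state it: the claim that the forcing part $\int_t^{t+1}\|F(s+\alpha_n,cf(s))-cF(s,f(s))\|^p\,ds$ is ``bounded above, uniformly in $t$, by the quantity appearing in \eqref{djurociljvjet}.'' In that integral the argument $u=f(s)$ varies with $s$, so the only bound you get for free is by $\int_t^{t+1}\sup_{u\in R(f)}\|F(s+\alpha_n,cu)-cF(s,u)\|^p\,ds$, with the supremum \emph{inside} the integral; the hypothesis \eqref{djurociljvjet} controls $\sup_{u}\int_t^{t+1}$, which is in general \emph{smaller} than $\int_t^{t+1}\sup_u$, so the inequality you need points the wrong way. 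The standard repair --- and the extra ingredient of Long--Ding's proof that the paper is implicitly invoking --- is to take a finite $\varepsilon$-net $\{u_1,\dots,u_m\}$ of $R(f)$, write for each $s$ the three-term decomposition comparing $F(s+\alpha_n,cf(s))$ with $F(s+\alpha_n,cu_{i(s)})$ and $cF(s,u_{i(s)})$ with $cF(s,f(s))$ via \eqref{vbnmp-manijak}, and then dominate the middle term by $\sum_{i=1}^m\int_t^{t+1}\|F(s+\alpha_n,cu_i)-cF(s,u_i)\|^p\,ds\le m\cdot\sup_{u\in R(f)}\int_t^{t+1}(\cdots)\,ds$. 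This requires $R(f)$ to be relatively compact (or, alternatively, one must read \eqref{djurociljvjet} with the supremum under the integral sign, in which case your estimate becomes immediate). You should either add this $\varepsilon$-net step explicitly, with the accompanying hypothesis on $R(f)$, or state which of the two readings of \eqref{djurociljvjet} you are using.
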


Similarly, we can prove the following

\begin{thm}\label{djurologijaforemate}
Suppose $p > 1 $
and the following conditions hold:
\begin{itemize}
\item[(i)] Let $F : I \times X \rightarrow E$ and let there exist a number  $ r\geq \max (p, p/p -1)$ and a function $ L_{F}\in L_{S}^{r}(I) $ such that
\eqref{vbnmp-manijak} holds.
\item[(ii)] There exists a strictly increasing sequence $(\alpha_{n})$ of positive real numbers tending to plus infinity such that 
\eqref{djurociljvjet} holds
and
\eqref{djurociljvjet12345} holds with the number $q$ replaced by the number $p$ therein.
\end{itemize}
Then $q:=pr/(p+r) \in [1, p)$ and the function $F(\cdot, f(\cdot))$ is Stepanov $(q,c)$-uniformly recurrent. Furthermore, the assumption that $F(\cdot,0)$ is Stepanov $q$-bounded implies that the function 
$F(\cdot, f(\cdot))$ is Stepanov $q$-bounded, as well.
\end{thm}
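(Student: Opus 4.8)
The plan is to mirror the proof of Theorem~\ref{djurologijamatre}, the only genuine change being a bookkeeping adjustment of the Hölder exponents together with a passage from $L^{p}$-control to $L^{q}$-control on unit intervals. First I would record the arithmetic: from $q=pr/(p+r)$ one gets $1/q=1/p+1/r$, and since $\max(p,p/(p-1))\le r<\infty$ this forces $1/q=1/p+1/r\le 1$ and $1/q>1/p$, i.e. $q\in[1,p)$. Moreover $q\le p\le r$, so the pair $a:=r/q$ and $b:=p/q$ are conjugate exponents both lying in $[1,\infty)$ (indeed $1/a+1/b=q/r+q/p=q(1/r+1/p)=1$), which is exactly what legitimizes the forthcoming Hölder estimate.

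Next, writing $\mathcal F(t):=F(t,f(t))$, I would insert $F(s+\alpha_{n},cf(s))$ as an intermediate point and apply \eqref{vbnmp-manijak} exactly as in \eqref{krujicalno} to obtain the pointwise bound $\|\mathcal F(s+\alpha_{n})-c\mathcal F(s)\|\le L_{F}(s+\alpha_{n})\|f(s+\alpha_{n})-cf(s)\|_{X}+\|F(s+\alpha_{n},cf(s))-cF(s,f(s))\|$. Taking the $L^{q}([t,t+1])$-norm and applying Minkowski's inequality reduces matters to showing that each of the two summands, raised to the $q$-th power and integrated over $[t,t+1]$, tends to $0$ as $n\to\infty$ \emph{uniformly} in $t\in I$.

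For the Lipschitz summand I would apply Hölder with the conjugate pair $a=r/q$, $b=p/q$ found above, bounding $\int_{t}^{t+1}L_{F}(s+\alpha_{n})^{r}\,ds$ by $\|L_{F}\|_{S^{r}}^{r}$ (the Stepanov $r$-norm being translation-invariant) and sending $\int_{t}^{t+1}\|f(s+\alpha_{n})-cf(s)\|_{X}^{p}\,ds$ to $0$ by hypothesis (ii), i.e. by \eqref{djurociljvjet12345} with $q$ replaced by $p$; the product then vanishes uniformly in $t$. For the $F$-difference summand I would exploit $q<p$ and the fact that $[t,t+1]$ carries unit mass: by the inclusion $L^{p}([t,t+1])\hookrightarrow L^{q}([t,t+1])$ (Jensen) the $L^{q}$-norm is dominated by the $L^{p}$-norm, and substituting $u=f(s)\in R(f)$ into \eqref{djurociljvjet} makes the resulting supremum tend to $0$ uniformly in $t$. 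Combining the two via Minkowski shows that $\hat{\mathcal F}$ satisfies the defining recurrence estimate in $L^{q}([0,1]:E)$ along $(\alpha_{n})$, i.e. $F(\cdot,f(\cdot))$ is Stepanov $(q,c)$-uniformly recurrent.

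Finally, for the Stepanov $q$-boundedness assertion I would start from $\|F(s,f(s))\|\le L_{F}(s)\|f(s)\|_{X}+\|F(s,0)\|$, take the $L^{q}([t,t+1])$-norm, apply Minkowski, and handle the product $L_{F}\cdot\|f\|_{X}$ with the same Hölder pair $(r/q,p/q)$ to bound it by $\|L_{F}\|_{S^{r}}\|f\|_{S^{p}}$ (here one uses that $f$ is Stepanov $p$-bounded), the remaining term being controlled by the assumed Stepanov $q$-boundedness of $F(\cdot,0)$. The step I expect to require the most care is precisely this exponent bookkeeping together with the uniformity in $t$: one must verify that $(r/q,p/q)$ are conjugate (equivalently $1/q=1/p+1/r$), that $r\ge p$ keeps $r/q\ge 1$, and that every constant produced is independent of $t$ and $n$ so that both limits can be taken uniformly; the only conceptual subtlety beyond Theorem~\ref{djurologijamatre} is the downward passage from the $L^{p}$-hypothesis \eqref{djurociljvjet} to an $L^{q}$-conclusion, which is legitimate exactly because $q<p$ on an interval of unit length.
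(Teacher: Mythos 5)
Your argument is correct and coincides with the paper's intended proof: the paper prints no argument for Theorem \ref{djurologijaforemate} (it says only ``Similarly, we can prove the following'' after Theorem \ref{djurologijamatre}, deferring to the argumentation of \cite[Theorem 2.2]{comp-adv} and \cite[Theorem 3.18]{abdoje}), and that argumentation is precisely your route --- the decomposition through $F(s+\alpha_{n},cf(s))$ via \eqref{vbnmp-manijak} as in \eqref{krujicalno}, H\"older with the conjugate pair $(r/q,p/q)$ coming from $1/q=1/p+1/r$, and the unit-interval embedding $L^{p}([t,t+1])\hookrightarrow L^{q}([t,t+1])$ for the second summand. Your exponent bookkeeping ($r\ge p/(p-1)$ forcing $q\ge 1$ and $r\ge p$ forcing $q<p$) is exactly the content of the added conclusion $q\in[1,p)$, and the one step you pass over quickly --- substituting $u=f(s)$ into \eqref{djurociljvjet}, which tacitly upgrades $\sup_{u}\int$ to $\int\sup_{u}$ --- is used in identically the same way by the authors themselves (see the application in Theorem \ref{valjevosabac}), so it is a feature of the hypothesis as stated rather than a deviation from the paper's proof.
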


The above results can be simply reformulated for the class of Stepanov $(p,c)$-almost periodic functions.
For the classes of asymptotically Stepanov $(p,c)$-uniformly recurrent (asymptotically Stepanov $(p,c)$-almost periodic) functions, we can 
simply extend the assertions of \cite[Proposition 2.7.3, Proposition 2.7.4]{nova-mono}. Details can be left to the interested readers.

\subsection{Invariance of $c$-almost type periodicity under the actions of convolution products}\label{stepamojalo}

In this subsection, we investigate the invariance of $c$-almost periodicity, $c$-uniform recurrence and semi-$c$-periodicity under the actions of finite and  infinite convolution products. 

We start by stating the following slight generalizations of \cite[Proposition 3.1, Proposition 3.2]{krag}, which can be deduced by using almost the same arguments as in this paper (see also \cite[Proposition 3.1]{densities} and \cite[Proposition 6, Proposition 7]{chelj}):

\begin{prop}\label{cuj-rad}
Suppose $1\leq p <\infty,$ $1/p +1/q=1$
and $(R(t))_{t> 0}\subseteq L(E,X)$ is a strongly continuous operator family satisfying that $M:=\sum_{k=0}^{\infty}\|R(\cdot)\|_{L^{q}[k,k+1]}<\infty .$ If $f : {\mathbb R} \rightarrow E$ is Stepanov $(p,c)$-almost periodic (Stepanov $p$-bounded and Stepanov $(p,c)$-uniformly recurrent/Stepanov $p$-bounded and Stepanov semi-$(p,c)$-periodic), then the function $F(\cdot),$ given by
\begin{align}\label{zad123}
F(t):=\int^{t}_{-\infty}R(t-s)f(s)\, ds,\quad t \in {\mathbb R},
\end{align}
is well-defined and $c$-almost periodic (bounded $c$-uniformly recurrent/bounded and semi-$c$-periodic).
\end{prop}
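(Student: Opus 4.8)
The plan is to reduce all three cases to a single uniform estimate controlling $\|F(t+\tau)-cF(t)\|$ (and its analogue with $c$ replaced by $c^m$) by the \emph{Stepanov} norm of the increment $f(\cdot+\tau)-cf(\cdot)$, with the multiplicative constant $M$ absorbing the tail of $(R(t))_{t>0}$. First I would settle well-definedness: after the substitution $u=t-s$ one writes $F(t)=\int_0^\infty R(u)f(t-u)\,du$, and splitting $[0,\infty)$ into the unit windows $[k,k+1]$ and applying H\"older's inequality on each gives
\begin{align*}
\int_k^{k+1}\bigl\|R(u)f(t-u)\bigr\|\,du\leq \|R(\cdot)\|_{L^q[k,k+1]}\Bigl(\int_k^{k+1}\|f(t-u)\|^p\,du\Bigr)^{1/p}\leq \|R(\cdot)\|_{L^q[k,k+1]}\,\|f\|_{S^p}.
\end{align*}
Summing over $k$ yields $\|F(t)\|\leq M\|f\|_{S^p}<\infty$, so $F$ is bounded (Stepanov $p$-boundedness of $f$ is available in each case, directly by hypothesis or, in the almost periodic case, via Proposition \ref{ogranicenost}). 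Continuity of $F$ is routine and follows by dominated convergence together with the strong continuity of $(R(t))_{t>0}$.

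The core is then one computation. Writing $g(w):=f(w+\tau)-cf(w)$, one has $f(t+\tau-u)-cf(t-u)=g(t-u)$, whence
\begin{align*}
\bigl\|F(t+\tau)-cF(t)\bigr\|\leq \sum_{k=0}^\infty \|R(\cdot)\|_{L^q[k,k+1]}\Bigl(\int_k^{k+1}\|g(t-u)\|^p\,du\Bigr)^{1/p}.
\end{align*}
The substitution $w=t-u$ turns each inner integral into $\int_{t-k-1}^{t-k}\|g(w)\|^p\,dw$, an integral of $\|g\|^p$ over a unit window, hence at most $\|g\|_{S^p}^p$. This gives the master estimate $\|F(t+\tau)-cF(t)\|\leq M\,\|f(\cdot+\tau)-cf(\cdot)\|_{S^p}$, uniformly in $t$; the identical argument with $\tau$ replaced by $mp$ and $c$ by $c^m$ (using $|c^m|=1$) yields $\|F(\cdot+mp)-c^mF(\cdot)\|_\infty\leq M\,\|f(\cdot+mp)-c^mf(\cdot)\|_{S^p}$.

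Finally I would feed the three hypotheses into the master estimate. If $f$ is Stepanov $(p,c)$-almost periodic, then $\hat f$ is $c$-almost periodic, so for each $\epsilon>0$ the $(\epsilon,c)$-periods of $\hat f$—exactly the $\tau$ with $\|f(\cdot+\tau)-cf(\cdot)\|_{S^p}\leq\epsilon$—form a relatively dense set, and by the estimate each such $\tau$ is an $(M\epsilon,c)$-period of $F$, so $F$ is $c$-almost periodic. If $f$ is Stepanov $p$-bounded and Stepanov $(p,c)$-uniformly recurrent, there is a sequence $\alpha_n\to\infty$ with $\|f(\cdot+\alpha_n)-cf(\cdot)\|_{S^p}\to 0$, and the estimate forces $\|F(\cdot+\alpha_n)-cF(\cdot)\|_\infty\to 0$, so $F$ is bounded and $c$-uniformly recurrent with the same sequence. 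For the semi-$c$-periodic case the defining property of $\hat f$ supplies, for each $\epsilon>0$, a $p>0$ with $\sup_{m}\|f(\cdot+mp)-c^mf(\cdot)\|_{S^p}\leq\epsilon$, and the $c^m$-version gives $\sup_{m}\|F(\cdot+mp)-c^mF(\cdot)\|_\infty\leq M\epsilon$, so $F$ is bounded and semi-$c$-periodic.

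The only genuine subtlety I expect is the translation of Stepanov-level control of the increment into uniform (sup-norm) control of $F$: the substitution $w=t-u$ must be tracked so that each inner integral is recognized as an integral over a unit window, and one must use that the Stepanov norm dominates the integral over \emph{every} unit window, not merely those anchored at integer sample points. Once this is in place—together with the Tonelli justification for interchanging $\sum_k$ with the integral, which is immediate since the integrands are nonnegative—the three cases are formally identical, with the finite constant $M$ doing all the work.
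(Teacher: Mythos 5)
Your argument is correct and is essentially the proof the paper intends: Proposition \ref{cuj-rad} is stated without proof, with a pointer to the analogous results in \cite{krag}, \cite{densities} and \cite{chelj}, all of which rest on exactly your master estimate $\bigl\|F(t+\tau)-c^{m}F(t)\bigr\|\leq M\bigl\|f(\cdot+\tau)-c^{m}f(\cdot)\bigr\|_{S^{p}}$ obtained from H\"older's inequality on the unit windows $[k,k+1]$, followed by the translation between $(\epsilon,c)$-periods of $\hat{f}$ and Stepanov norms of the increment. The one point to tighten is the continuity of $F$: since $f$ is merely locally $p$-integrable, pointwise dominated convergence is not available; instead split the integral at a large $N$ (the tail being small uniformly in $t$ because $\sum_{k}\|R(\cdot)\|_{L^{q}[k,k+1]}<\infty$) and use the continuity of translation in $L^{p}$ on $[0,N]$ together with $R\in L^{q}_{loc}$.
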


\begin{prop}\label{stewpa-wqer}
Suppose $1\leq p <\infty,$ $1/p +1/q=1$
and $(R(t))_{t> 0}\subseteq L(E,X)$ is a strongly continuous operator family satisfying that, for every 
$s\geq 0,$ we have 
$$
m_{s}:=\sum_{k=0}^{\infty}\|R(\cdot)\|_{L^{q}[s+k,s+k+1]}<\infty .
$$ 
Suppose, further, that $g: {\mathbb R} \rightarrow E$ is Stepanov $(p,c)$-almost periodic (Stepanov $p$-bounded and Stepanov $(p,c)$-uniformly recurrent/Stepanov $p$-bounded and Stepanov semi-$(p,c)$-periodic), $w : [0,\infty)\rightarrow E$ satisfies
$\hat{w}\in C_{0}([0,\infty) : L^{p}([0,1]:E))$ and $f(t)=g(t)+w(t)$ for all $t\geq 0.$
Let there exist a finite number $M >0$ such that
the following holds:
\begin{itemize}
\item[(i)] $ \lim_{t\rightarrow +\infty}\int^{t+1}_{t}\bigl[\int_{M}^{s}\|R(r)\| \|w(s-r)\| \, dr\bigr]^{p}\, ds=0.$ 
\item[(ii)] $\lim_{t\rightarrow +\infty}\int^{t+1}_{t}m_{s}^{p}\, ds=0.$
\end{itemize}
Then the function $H(\cdot),$ given by
\begin{align*}
H(t):=\int^{t}_{0}R(t-s)f(s)\, ds,\quad t\geq 0,
\end{align*}
is well-defined, bounded and asymptotically Stepanov $(p,c)$-almost periodic (asymptotically Stepanov $(p,c)$-uniformly recurrent/asymptotically Stepanov semi-$(p,c)$-periodic). 
\end{prop}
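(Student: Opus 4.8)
The plan is to establish Proposition~\ref{stewpa-wqer} by decomposing the finite convolution $H(t)$ into a piece that inherits the asymptotic Stepanov structure from $g(\cdot)$ via the infinite convolution of Proposition~\ref{cuj-rad}, plus correction terms that vanish at infinity in the appropriate Stepanov sense. Concretely, I would first write $f=g+w$ and split
\begin{align*}
H(t)=\int_{-\infty}^{t}R(t-s)g(s)\,ds-\int_{-\infty}^{0}R(t-s)g(s)\,ds+\int_{0}^{t}R(t-s)w(s)\,ds.
\end{align*}
The first integral is exactly the function $F(\cdot)$ from Proposition~\ref{cuj-rad} applied to the Stepanov $(p,c)$-almost periodic (respectively Stepanov $p$-bounded and Stepanov $(p,c)$-uniformly recurrent, or semi-$(p,c)$-periodic) function $g(\cdot)$, so by that proposition it is well-defined and $c$-almost periodic (respectively bounded $c$-uniformly recurrent, or bounded semi-$c$-periodic). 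This supplies the principal term of the desired asymptotic decomposition, and the task reduces to showing that the remaining two terms have Stepanov lift lying in $C_{0}([0,\infty):L^{p}([0,1]:E))$.

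Next I would control the tail term $T_{1}(t):=\int_{-\infty}^{0}R(t-s)g(s)\,ds=\int_{t}^{\infty}R(r)g(t-r)\,dr$. Using the Stepanov $p$-boundedness of $g(\cdot)$ together with Hölder's inequality on each unit interval $[s+k,s+k+1]$ and the definition of $m_{s}$, one obtains a bound of the form $\|T_{1}(t)\|\lesssim\|g\|_{S^{p}}\,m_{t}$; hypothesis (ii), namely $\lim_{t\to+\infty}\int_{t}^{t+1}m_{s}^{p}\,ds=0$, then forces the Stepanov lift of $T_{1}$ into $C_{0}$. For the term $T_{2}(t):=\int_{0}^{t}R(t-s)w(s)\,ds$, I would split the inner integral at the threshold $M$, writing it as $\int_{0}^{M}R(r)w(t-r)\,dr+\int_{M}^{t}R(r)w(t-r)\,dr$. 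The second piece is handled directly by hypothesis (i), which is tailored precisely to guarantee that its $L^{p}$-average over $[t,t+1]$ tends to zero. The first piece, over the fixed finite range $[0,M]$, is where the decay $\hat{w}\in C_{0}([0,\infty):L^{p}([0,1]:E))$ enters: since $w(t-r)$ for $r\in[0,M]$ samples $w$ on a window receding to $+\infty$ as $t\to\infty$, the uniform $L^{p}$-smallness of $\hat{w}$ there yields the required vanishing.

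The main obstacle I expect is the bookkeeping in the $T_{2}$ estimate, specifically reconciling the operator-norm integrability encoded in $m_{s}$ with the Stepanov decay of $\hat{w}$ over the finite window $[0,M]$, since $w$ is only assumed locally $p$-integrable with $C_{0}$ Stepanov lift rather than pointwise small. I would manage this by a Minkowski/Young-type argument: estimating $\bigl(\int_{t}^{t+1}\|\int_{0}^{M}R(r)w(u-r)\,dr\|^{p}\,du\bigr)^{1/p}$ by interchanging the order of integration and applying Hölder in $r$ against the $L^{q}$-norms of $R$ on unit subintervals, which reduces the bound to a finite sum of local $L^{p}$-norms of $w$ near infinity, each controlled by $\|\hat{w}(u-\cdot)\|_{L^{p}[0,1]}$ for large $u$. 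Once these three estimates are assembled, well-definedness and boundedness of $H(\cdot)$ follow from the same bounds, and the asymptotic Stepanov $(p,c)$-type periodicity is immediate from Definition~\ref{stepa-anti-c}(ii) by taking the principal term $\int_{-\infty}^{t}R(t-s)g(s)\,ds$ and the $C_{0}$ remainder $T_{2}-T_{1}$.
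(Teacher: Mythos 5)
Your decomposition $H(t)=\bigl(\int_{-\infty}^{t}R(t-s)g(s)\,ds\bigr)-\int_{-\infty}^{0}R(t-s)g(s)\,ds+\int_{0}^{t}R(t-s)w(s)\,ds$, with Proposition \ref{cuj-rad} supplying the principal $c$-almost periodic (resp.\ $c$-uniformly recurrent, semi-$c$-periodic) term and hypotheses (i)--(ii) together with $\hat{w}\in C_{0}$ forcing the two remainders to have Stepanov lift in $C_{0}$, is exactly the argument the paper intends: the proof is omitted there with a pointer to \cite[Proposition 3.2]{krag}, which proceeds by the same splitting. Your proposal is correct and matches that route, including the key estimates $\|T_{1}(t)\|\leq\|g\|_{S^{p}}\,m_{t}$ and the unit-interval H\"older bookkeeping for the $[0,M]$ portion of $T_{2}$.
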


In the following slight extension of \cite[Proposition 3.2]{densities}, we consider the case in which the forcing term $f(\cdot)$ is not Stepanov $p$-bounded, in general (the proof is essentially the same and therefore omitted; Proposition \ref{stewpa-wqer} can be reformulated in this context, as well):

\begin{prop}\label{redari12345}
Suppose that $1\leq p <\infty,$ $1/p +1/q=1,$ $f : {\mathbb R} \rightarrow E$ is Stepanov $(p,c)$-almost periodic (Stepanov $(p,c)$-uniformly recurrent/Stepanov semi-$(p,c)$-periodic), there exists a continuous function $P: {\mathbb R} \rightarrow [1,\infty)$ such that 
\begin{align*}
\Biggl(\int^{t+1}_{t}\|f(s)\|^{p}\, ds\Biggr)^{1/p} \leq P(t),\quad t\in {\mathbb R}
\end{align*} 
and $(R(t))_{t> 0}\subseteq L(E,X)$ is a strongly continuous operator family satisfying that for each $t\in {\mathbb R}$ we have
$$
\sum_{k=0}^{\infty}\|R(\cdot)\|_{L^{q}[k,k+1]}P(t-k)<\infty .
$$ 
If the function $\hat{f} : {\mathbb R} \rightarrow L^{p}([0,1] : E)$ is uniformly continuous, then the function $F: {\mathbb R} \rightarrow X,$ given by
\eqref{zad123},
is well-defined and $c$-almost periodic ($c$-uniformly recurrent/semi-$c$-periodic).
\end{prop}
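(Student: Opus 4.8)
The plan is to make the substitution $r=t-s$, rewriting the convolution as $F(t)=\int_{0}^{\infty}R(r)f(t-r)\,dr$, and then to extract well-definedness, uniform continuity, and all three structural properties from a single master estimate. For $t\in\mathbb{R}$, $\tau\geq 0$ and a scalar $\lambda\in\mathbb{C}$, splitting the half-line into the unit intervals $[k,k+1]$, applying H\"older's inequality with exponents $q$ and $p$, and substituting $u=t-r$ inside each interval gives
\begin{align*}
\bigl\|F(t+\tau)-\lambda F(t)\bigr\| \leq \sum_{k=0}^{\infty}\|R(\cdot)\|_{L^{q}[k,k+1]}\Biggl(\int_{t-k-1}^{t-k}\bigl\|f(u+\tau)-\lambda f(u)\bigr\|^{p}\,du\Biggr)^{1/p}.
\end{align*}
The decisive observation is that the inner factor equals $\bigl\|\hat{f}((t-k-1)+\tau)-\lambda\hat{f}(t-k-1)\bigr\|_{L^{p}([0,1]:E)}$, so it is controlled uniformly in both $t$ and $k$ by the Stepanov property of $\hat{f}$.

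First I would settle well-definedness and regularity. Applying the same splitting to $F(t)$ itself and invoking the growth bound $(\int_{t-k-1}^{t-k}\|f\|^{p})^{1/p}\leq P(t-k-1)$ yields $\|F(t)\|\leq\sum_{k}\|R(\cdot)\|_{L^{q}[k,k+1]}P(t-k-1)$, which is exactly the convergence hypothesis evaluated at $t-1$; hence the integral defining $F(t)$ converges absolutely. Since $P\geq 1$, that same hypothesis forces $M:=\sum_{k}\|R(\cdot)\|_{L^{q}[k,k+1]}<\infty$, a bound independent of $t$. Uniform continuity of $F$ then follows from the master estimate with $\lambda=1$ and $\tau$ replaced by a small shift $h$: the inner factor becomes $\|\hat{f}((t-k-1)+h)-\hat{f}(t-k-1)\|_{L^{p}([0,1]:E)}$, which the assumed uniform continuity of $\hat{f}$ renders arbitrarily small uniformly in $t$ and $k$, so that $\|F(t+h)-F(t)\|\leq M\,\sup_{v\in\mathbb{R}}\|\hat{f}(v+h)-\hat{f}(v)\|_{L^{p}([0,1]:E)}\to 0$ as $h\to 0$, uniformly in $t$.

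The three membership claims then follow from the master estimate in parallel. For Stepanov $(p,c)$-uniform recurrence, taking $\lambda=c$ and $\tau=\alpha_{n}$ (the recurrence sequence of $\hat{f}$) bounds the inner factor by $\delta_{n}:=\sup_{v\in\mathbb{R}}\|\hat{f}(v+\alpha_{n})-c\hat{f}(v)\|_{L^{p}([0,1]:E)}$, whence $\sup_{t}\|F(t+\alpha_{n})-cF(t)\|\leq M\delta_{n}\to 0$, exhibiting $F$ as $c$-uniformly recurrent. For Stepanov $(p,c)$-almost periodicity, I fix $\varepsilon>0$, set $\lambda=c$, and let $\tau$ range over the relatively dense set of $(\varepsilon/M,c)$-periods of $\hat{f}$, so that the master estimate gives $\|F(t+\tau)-cF(t)\|\leq\varepsilon$ and the same set is relatively dense for $F$. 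For the semi-$(p,c)$-periodic case I replace $\tau$ by $m\rho$ and $\lambda$ by $c^{m}$, where, using Proposition \ref{netodin}, $\rho>0$ is the common period of $\hat{f}$ attached to $\varepsilon/M$; the master estimate then yields $\|F(t+m\rho)-c^{m}F(t)\|\leq\varepsilon$ for all $m\in\mathbb{N}$ and $t\in I$.

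I expect the only genuine subtlety to lie in the Stepanov $(p,c)$-uniformly recurrent case, in which $\hat{f}$ (and hence $f$) need not be bounded, so the comfortable estimate $\sup_{t}\|\hat{f}(t)\|_{L^{p}}<\infty$ available in Proposition \ref{cuj-rad} is lost. The resolution is that unboundedness threatens only well-definedness, and that is precisely what the growth control $P$ together with the weighted summability of $R$ repairs; the recurrence and continuity estimates never involve $P$, resting only on the uniform smallness of $\hat{f}(\cdot+\alpha_{n})-c\hat{f}(\cdot)$ and on the finiteness of $M$, the latter being a free consequence of $P\geq 1$.
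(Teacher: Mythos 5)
Your proposal is correct and follows essentially the route the paper has in mind (the proof is omitted there with a pointer to the standard argument of Proposition \ref{cuj-rad} and \cite[Proposition 3.2]{densities}): H\"older's inequality on the unit intervals $[k,k+1]$, identification of the inner factor with $\bigl\|\hat{f}((t-k-1)+\tau)-\lambda\hat{f}(t-k-1)\bigr\|_{L^{p}([0,1]:E)}$, and the weighted summability hypothesis to secure absolute convergence. Your two supplementary observations --- that $P\geq 1$ yields $M=\sum_{k}\|R(\cdot)\|_{L^{q}[k,k+1]}<\infty$ for free, and that the assumed uniform continuity of $\hat{f}$ is exactly what replaces Stepanov boundedness in establishing the continuity of $F$ --- are precisely the points the ``slight extension'' rests on.
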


\section{Applications to the abstract Volterra integro-differential inclusions}\label{profice97531} 

In this section, we will present some illustrative applications of our abstract results in the analysis of the existence and uniqueness of $c$-almost periodic type solutions to the abstract (semilinear) Volterra integro-differential inclusions. 

First of all, 
we would like to note that the results established in Subsection \ref{stepamojalo} can be applied at any place where the variation of parameters formula takes effect.
Concerning semilinear problems, we can apply
our results in the study of the existence and uniqueness of $c$-almost periodic solutions and $c$-uniformly recurrent solutions of the fractional semilinear Cauchy 
inclusion
\begin{align}\label{mqwert12345}
D_{t,+}^{\gamma}u(t)\in {\mathcal A}u(t)+F(t,u(t)),\ t\in {\mathbb R},
\end{align}
where $D_{t,+}^{\gamma}$ denotes the Riemann-Liouville fractional derivative of order $\gamma \in (0,1),$ 
$F : {\mathbb R} \times X \rightarrow E$ satisfies certain properties, and ${\mathcal A}$ is a closed multivalued linear operator satisfying condition \cite[(P)]{nova-mono}. 
To explain this in more detail, fix a strictly increasing sequence $(\alpha_{n})$ of positive reals tending to plus infinity and define
\begin{align*}
BUR_{(\alpha_{n});c}({\mathbb R} : E)&:=\Bigl\{f\in UR_{c}({\mathbb R} : E) \, ;\, f(\cdot)\mbox{ is bounded and } 
\\&
\lim_{n\rightarrow+\infty}\sup_{t\in {\mathbb R}}\bigl\| f(t+\alpha_{n})-cf(t)\bigr\|_{\infty}=0
\Bigr\}.
\end{align*}
Equipped with the metric $d(\cdot,\cdot):=\|\cdot-\cdot\|_{\infty},$ $BUR_{(\alpha_{n});c}({\mathbb R} : E)$ becomes a complete metric space. Let $(R_{\gamma}(t))_{t>0}$ be the operator family considered in \cite{nova-mono} and \cite{fcaj}. Then we know that
\begin{align}\label{deran98765}
\|R_{\gamma}(t)\|=O\bigl(t^{\gamma-1}+t^{-\gamma-1}\bigr),\quad t>0.
\end{align}
It is said that a continuous function $u: {\mathbb R} \rightarrow E$ is a mild solution of \eqref{mqwert12345} if and only if
\begin{align*}
u(t)=\int^{t}_{-\infty}R_{\gamma}(t-s)F\bigl(s,u(s)\bigr)\, ds,\quad t\in {\mathbb R}.
\end{align*}

Now we are able to state the following result, which is very similar to \cite[Theorem 3.1]{nova-mono}:

\begin{thm}\label{valjevosabac}
Suppose that the function $F : {\mathbb R} \times E \rightarrow E$ satisfies that for each bounded subset $B$ of $E$ there exists a finite real constant $M_{B}>0$ such that
$\sup_{t\in {\mathbb R}}\sup_{y\in B}\| F(t,y)\|\leq M_{B}.$ Suppose, further, that
the function $F : {\mathbb R}\times E \rightarrow E$ is Stepanov $(p,c)$-uniformly recurrent with  $p > 1, $ and there exist a number $ r\geq \max (p, p/p -1)$ and a function $ L_{F}\in L_{S}^{r}(I) $ such that
$q:=pr/(p+r)>1$ and \eqref{vbnmp-manijak} holds with $I={\mathbb R}.$ If
\begin{align}\label{gejacina}
\frac{(\gamma -1)q}{q-1}>-1,
\end{align}
there exists an integer $n\in {\mathbb N}$ such that
$M_{n}<1,$ where 
\begin{align*}
M_{n}:=&\sup_{t\geq 0}\int^{t}_{-\infty}\int^{x_{n}}_{-\infty}\cdot \cdot \cdot \int^{x_{2}}_{-\infty}
\Bigl\| R_{\gamma}(t-x_{n})\Bigr\|
\\& \times \prod^{n}_{i=2}\Bigl\| R_{\gamma}(x_{i}-x_{i-1})\Bigr\| \prod^{n}_{i=1}L_{F}(x_{i})\, dx_{1}\, dx_{2}\cdot \cdot \cdot \, dx_{n},
\end{align*}
and 
\eqref{djurociljvjet} holds with the set $R(f)$ replaced therein with an arbitrary bounded set $B\subseteq E,$ then the abstract semilinear fractional Cauchy inclusion \eqref{mqwert12345} has a unique bounded uniformly recurrent solution which belongs to the space $BUR_{(\alpha_{n}); c}({\mathbb R} : E). $
\end{thm}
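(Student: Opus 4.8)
The plan is to realize the unique mild solution as the fixed point of the nonlinear integral operator and to exploit the fact that $BUR_{(\alpha_{n});c}({\mathbb R}:E)$ is a complete metric space. I would define the operator $\Upsilon$ on $BUR_{(\alpha_{n});c}({\mathbb R}:E)$ by
$$
(\Upsilon u)(t):=\int^{t}_{-\infty}R_{\gamma}(t-s)F\bigl(s,u(s)\bigr)\, ds,\quad t\in {\mathbb R},
$$
so that a function $u\in BUR_{(\alpha_{n});c}({\mathbb R}:E)$ is a mild solution of \eqref{mqwert12345} if and only if $\Upsilon u=u$. It therefore suffices to prove that $\Upsilon$ is a well-defined self-map of this space possessing a unique fixed point.

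First I would verify the self-mapping property. Fix $u\in BUR_{(\alpha_{n});c}({\mathbb R}:E)$; since $u$ is bounded, its range lies in a bounded set $B$, and the growth assumption on $F$ gives $\sup_{t\in {\mathbb R}}\|F(t,u(t))\|\leq M_{B}<\infty$, whence $s\mapsto F(s,u(s))$ is Stepanov $q$-bounded. Because $u$ is bounded and $c$-uniformly recurrent along $(\alpha_{n})$, the elementary estimate $\int^{t+1}_{t}\|u(s+\alpha_{n})-cu(s)\|^{p}\, ds\leq \|u(\cdot+\alpha_{n})-cu(\cdot)\|_{\infty}^{p}$ shows that \eqref{djurociljvjet12345} (with $q$ replaced by $p$) holds, while \eqref{djurociljvjet} is granted by hypothesis for the bounded set $B\supseteq R(u)$. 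Thus Theorem \ref{djurologijaforemate} applies and $F(\cdot,u(\cdot))$ is Stepanov $(q,c)$-uniformly recurrent along the very same sequence $(\alpha_{n})$.

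Next I would feed this into the convolution principle. Writing $q'=q/(q-1)$ and using \eqref{deran98765}, the local contribution near the origin is $\int^{1}_{0}t^{(\gamma-1)q'}\, dt$, which is finite exactly by \eqref{gejacina}, while the factor $t^{-\gamma-1}$ forces $\|R_{\gamma}\|_{L^{q'}[k,k+1]}=O(k^{-\gamma-1})$ and hence $\sum_{k\geq 0}\|R_{\gamma}\|_{L^{q'}[k,k+1]}<\infty$. Therefore Proposition \ref{cuj-rad} (with its exponent $p$ taken to be the present $q$) applies to the Stepanov $q$-bounded, Stepanov $(q,c)$-uniformly recurrent function $F(\cdot,u(\cdot))$ and shows that $\Upsilon u$ is well-defined, bounded and $c$-uniformly recurrent; tracking the sequence through the proof of that proposition yields $\lim_{n\to+\infty}\sup_{t\in{\mathbb R}}\|(\Upsilon u)(t+\alpha_{n})-c(\Upsilon u)(t)\|_{\infty}=0$ along $(\alpha_{n})$, so $\Upsilon u\in BUR_{(\alpha_{n});c}({\mathbb R}:E)$. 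Finally, using the Lipschitz estimate \eqref{vbnmp-manijak} one has
$$
\bigl\|(\Upsilon u)(t)-(\Upsilon v)(t)\bigr\|\leq \int^{t}_{-\infty}\bigl\|R_{\gamma}(t-s)\bigr\|L_{F}(s)\bigl\|u(s)-v(s)\bigr\|\, ds,
$$
and iterating this bound $n$ times gives $\bigl\|(\Upsilon^{n}u)(t)-(\Upsilon^{n}v)(t)\bigr\|\leq M_{n}\,\|u-v\|_{\infty}$, with $M_{n}$ the nested integral from the statement. Since $M_{n}<1$, the iterate $\Upsilon^{n}$ is a contraction, and the standard extension of Banach's principle (a power of a self-map being a contraction) furnishes a unique fixed point, i.e. the unique bounded $c$-uniformly recurrent mild solution lying in $BUR_{(\alpha_{n});c}({\mathbb R}:E)$. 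The step I expect to be the main obstacle is the self-mapping property: one must simultaneously manage the exponent bookkeeping $p\rightsquigarrow q\rightsquigarrow q'$, the role of \eqref{gejacina} in the summability of $\|R_{\gamma}\|_{L^{q'}[k,k+1]}$, and—most delicately—the fact that the convolution preserves $c$-uniform recurrence along the fixed sequence $(\alpha_{n})$ rather than along some subsequence, which is precisely what anchors the solution in $BUR_{(\alpha_{n});c}({\mathbb R}:E)$.
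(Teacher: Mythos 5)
Your proposal follows essentially the same route as the paper's own proof: define $\Upsilon$ on $BUR_{(\alpha_{n});c}({\mathbb R}:E)$, invoke Theorem \ref{djurologijaforemate} to get that $F(\cdot,u(\cdot))$ is Stepanov $(q,c)$-uniformly recurrent, use \eqref{deran98765} and \eqref{gejacina} to verify the summability hypothesis of Proposition \ref{cuj-rad} with exponent $q'=q/(q-1)$, and conclude with the iterated contraction estimate $\|\Upsilon^{n}u_{1}-\Upsilon^{n}u_{2}\|_{\infty}\leq M_{n}\|u_{1}-u_{2}\|_{\infty}$ and the standard extension of the Banach fixed point theorem. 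Your additional remarks on the exponent bookkeeping and on tracking the fixed sequence $(\alpha_{n})$ through the convolution step are correct and only make explicit what the paper leaves implicit.
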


\begin{proof}
Define $\Upsilon : BUR_{(\alpha_{n});c}({\mathbb R} : E) \rightarrow BUR_{(\alpha_{n});c}({\mathbb R} : E)$ by
$$
(\Upsilon u)(t):=\int^{t}_{-\infty}R_{\gamma}(t-s)F(s,u(s))\, ds,\quad t\in {\mathbb R}.
$$
 Suppose that $u\in BUR_{(\alpha_{n});c}({\mathbb R} : E).$ Then $R(u)=B$ is a bounded set and the mapping $t\mapsto F(t,u(t)),$ $t\in {\mathbb R}$ is bounded due to the prescribed assumption.
Applying Theorem \ref{djurologijaforemate}, we have that the function $F(\cdot, u(\cdot))$ is Stepanov $(q,c)$-uniformly recurrent. 
Define $q':=q/(q-1).$ By \eqref{deran98765} and \eqref{gejacina}, we have $\|R_{\gamma}(\cdot)\| \in L^{q'}[0,1]$ and
$
\sum_{k=0}^{\infty}\| R_{\gamma}(\cdot)\|_{L^{q'}[k,k+1]}<\infty .
$
Applying Proposition \ref{cuj-rad}, 
we get that the function $t\mapsto \int^{t}_{-\infty}R_{\gamma}(t-s)F(s,u(s))\, ds,$ $t\in {\mathbb R}$
is bounded and $c$-uniformly recurrent, implying that $
\Upsilon u \in BUR_{(\alpha_{n});c}({\mathbb R} : E) $, as claimed.
Furthermore, a simple calculation shows that
\begin{align*}
\Bigl \| \bigl(\Upsilon^{n} u_{1}\bigr)-\bigl(\Upsilon^{n} u_{2} \bigr)\Bigr\|_{\infty}\leq M_{n}\bigl\| u_{1}-u_{2}\bigr\|_{\infty},\quad u_{1},\ u_{2}\in BUR_{(\alpha_{n});c}({\mathbb R} : E),\ n\in {\mathbb N}.
\end{align*}
Since there exists an integer $n\in {\mathbb N}$ such that
$M_{n}<1,$ the well known extension of the Banach contraction principle shows that the mapping $\Upsilon (\cdot)$ has a unique fixed point, finishing the proof of the theorem.
\end{proof}

Similarly we can analyze the existence and uniqueness of asymptotically
Stepanov $(p,c)$-almost periodic
solutions and Stepanov $(p,c)$-uniformly recurrent solutions of the fractional semilinear Cauchy inclusion
\[
\hbox{(DFP)}_{f,\gamma} : \left\{
\begin{array}{l}
{\mathbf D}_{t}^{\gamma}u(t)\in {\mathcal A}u(t)+F(t,u(t)),\ t\geq 0,\\
\quad u(0)=x_{0},
\end{array}
\right.
\]
where ${\mathbf D}_{t}^{\gamma}$ denotes the Caputo fractional derivative of order $\gamma \in (0,1],$ $x_{0}\in E$ and
$F : [0,\infty) \times X \rightarrow E,$ satisfies certain properties, and ${\mathcal A}$ is a closed multivalued linear operator satisfying condition \cite[(P)]{nova-mono}.

The examples and results presented by Zaidman \cite[Examples 4, 5, 7, 8; pp. 32-34]{30} can be used to provide 
certain applications of our results, as well. For example, the unique regular solution of the heat equation $u_{t}(x,t)=u_{xx}(x,t),$ $x\in {\mathbb R},$ $t\geq 0,$ accompanied with the initial condition $u(x,0)=f(x),$ is given by
$$
u(x,t):=\frac{1}{2\sqrt{\pi t}}\int^{+\infty}_{-\infty}e^{-\frac{(x-s)^{2}}{4t}}f(s)\, ds,\quad x\in {\mathbb R},\ t>0;
$$
see \cite[Example 4]{30}.
Let the number $t_{0}>0$ be fixed, and let the function $f(\cdot)$ be bounded $c$-uniformly recurrent ($c$-almost periodic, semi-$c$-periodic). Since $e^{-\cdot^{2}/4t_{0}} \in L^{1}({\mathbb R}),$ we can use the fact that the space of bounded $c$-uniformly recurrent functions ($c$-almost periodic functions, semi-$c$-periodic functions) is convolution invariant in order to see that the 
solution $x\mapsto u(x,t_{0}),$ $x\in {\mathbb R}$ is bounded and $c$-uniformly recurrent ($c$-almost periodic, semi-$c$-periodic). 

It is clear that the concepts of Weyl almost periodicity, Doss almost periodicity and Besicovitch-Doss almost periodicity, among many others, can be reconsidered and generalized following the approach based on the use of difference $f(\cdot +\tau)-cf(\cdot)$ in place of the usual one $f(\cdot +\tau)-f(\cdot).$ 
We close the paper with the observation that the class of $c$-almost automorphic functions will be analyzed in our forthcoming paper
\cite{novpinto}.

\end{document}